\newcommand\myurl[1]{\url{#1}}
\newtheorem{thm}{Theorem}
\newtheorem{prop}{Proposition}
\newtheorem{lemma}{Lemma}
\newtheorem{rem}{Remark}
\newtheorem{cor}{Corollary}
\newtheorem{exa}{Example}
\def\Z{\mathbb Z}
\def\Q{\mathbb Q}
\def\C{\mathbb C}
\def\Q{{\mathbb Q}}
\def\Z{{\mathbb Z}}
\def\C{{\mathbb C}}
\def\L{\mathcal L}
\def\H{\mathcal H}
\def\X{\mathcal X}
\def\s{\mathfrak s}
\def\D{\Delta}
\def\emb{\hookrightarrow}
\def\a{\alpha}
\def\e{\varepsilon}
\def\t{\tau}
\def\<{\langle}
\def\>{\rangle}
\def\zz{\zeta}
\def\z{\omega}
\def\iso{{\, \cong\, }}
\def\G{G}
\def\Z{\mathbb Z}
\def\Q{\mathbb Q}
\def\C{\mathbb C}
\def\C{\mathcal C}
\def\H{\mathcal H}
\def\a{\alpha}
\def\Z{\mathbb Z}
\def\Q{\mathbb Q}
\def\C{\mathbb C}
\def\L{\mathcal L}
\def\H{\mathcal H}
\def\s{\sigma}
\def\a{\alpha}
\def\<{\langle}
\def\>{\rangle}
\def\X{\mathcal X}
\def\emb{\hookrightarrow}
\def\D{\Delta}
\def\L{\mathcal L}
\def\Q{{\mathbb Q}}
\def\Z{{\mathbb Z}}
\def\C{{\mathbb C}}
\def\H{\mathcal H}
\def\L{\mathcal L}
\def\O{\Omega}
\def\Aut{\mbox{Aut }}
\def\bAut{\overline {\mathrm{Aut}}}
\def\D{\Delta}
\def\e{\varepsilon}
\def\t{\tau}
\def\a{{\alpha }}
\def\<{\langle}
\def\>{\rangle}
\def\s{\mathfrak s}
\def\emb{\hookrightarrow }
\def\zz{\zeta}
\def\z{\omega}
\def\div{\mbox{div}}
\def\ord{\mbox{ord}}
\title[$2$-Weierstrass points of genus 3 curves]{The $2$-Weierstrass points of genus 3 hyperelliptic curves with extra automorphisms}
\author{T. Shaska}
\address{Department of Mathematics \& Statistics, Oakland University, Rochester, MI, 48309.}
\author{C. Shor}
\address{Department of Mathematics\\ Western New England University\\ Springfield, MA 01119.}
\subjclass[2000]{Primary 54C40, 14E20; Secondary 46E25, 20C20}
\date{\today}
\keywords{invariants, binary forms, genus 3, algebraic curves}
\begin{document}

\begin{abstract} For each group $G$,  $(|G| > 2)$ \,  which acts as a full automorphism group on a genus 3 hyperelliptic curve, we determine the family of curves which have  $2$-Weierstrass points.  Such families of curves are explicitly determined in terms of the absolute invariants of binary octavics.   The 1-dimensional families that we discover have the property that they contain only genus 0 components. 
\end{abstract}

\maketitle

\section{Introduction}
The Weierstrass points of curves have always been a focus of investigation. The Riemann-Roch theorem shows that every point on a genus $g\geq 2$  curve has a non-constant function associated to it which has a pole of order less than or equal to $g+1$ and no other poles.  A \textit{Weierstrass point} is a point such that there is a non-constant function which has a low order pole and no other poles. By low order we mean a pole of order $\leq g$.  

Hurwitz showed that all Weierstrass points on a given curve are zeroes of a certain high order differential form.  The \textit{Weierstrass weight} of a point is the order of the zero of this form at the point. 
Since this differential form has degree $g^3-g$ then there are only finitely many Weierstrass points.  Moreover, these points are all algebraic over the field of definition of the curve; see \cite{silverman}.  When such points are rational and the curve is defined over $\Q$ then interesting applications arise in number theory. Bhargava and Gross showed in \cite{bhargava} that
when all hyperelliptic curves of genus $g \geq 2$  having a rational Weierstrass point (of weight 1) 
are ordered by height, the average size of the 2-Selmer group of their Jacobians is equal to 3.
As a consequence, using the Chabauty's method they  show that   the majority of hyperelliptic
curves of genus $g \geq 3$  with a rational Weierstrass point have fewer than 20 rational points.


This paper has as a starting point the paper by Farahat/Sakai \cite{FS} who classify the 3-Weierstrass points on genus two curves with extra involutions. They describe such points using the dihedral invariants of such curves as defined in \cite{sh-v}.  In \cite{b-th} the automorphism groups of genus 3 hyperelliptic curves are characterized in terms of the dihedral invariants.  Naturally one asks if the methods in \cite{FS} can be extended to genus 3 via methods described in \cite{b-th}. 

The goal of this paper is to classify the $q$-Weierstrass points of genus 3 hyperelliptic curves with extra automorphisms in terms of the coordinates of the hyperelliptic moduli $\H_3$.  
We fix a group $G$ which acts on a genus 3 hyperelliptic curve as a full automorphism group.  For a given signature, the locus of curves with automorphism group $G$ is an irreducible locus in the hyperelliptic moduli $\H_3$.  We only focus on the groups $G$ which determine a dimensional $d>0$ family in $\H_3$.  A complete list of such groups, signatures, and inclusions among the loci is given in \cite{b-th}.   The locus of curves $C$ with the Klein viergrouppe $V_4 \emb \Aut (C)$ is a 3-dimensional locus in $\H_3$. The appropriate invariants in this case are the dihedral invariants $\s_2, \s_3, \s_4$ as defined in \cite{b-th}. All the other cases can be described directly in terms of absolute invariants $t_1, \dots , t_6$ or their equivalents as defined in \cite{hyp_mod_3}.  The main goal of this paper is to study $q$-Weierstrass points of curves in each $G$-locus, for each $G$.

In the second section we give some basic preliminaries for Weierstrass points and their weights. Most of the material can be found everywhere in the literature. In particular we refer to \cite{stichtenoth, farkas, cornalba1}.  The third section is also an introduction to genus 3 hyperelliptic curves with extra involutions and their dihedral invariants.  Most of the material from that section can be found in \cite{b-th}. The dihedral invariants $\s_2, \s_3, \s_4$ which are defined in this section will be used in later sections to classify the Weierstrass points of genus 3 curves with extra automorphisms. 

In section four we focus on the $q$-Weierstrass points of genus 3 curves for $q=1$ and $2$. We show how to construct of basis for the space of holomorphic $q$-differentials. The main result of this section is the following. Let $f(x)=(x^8+ax^6+bx^4+cx^2+1)^{1/2}$,   $C$ be given by $y^2=f(x)^2$, and let $P_m^w$ be a finite non-branch point of $C$.  Let $N=\min\left\{n\in\mathbb{N} : n\geq 5, f^{(n)}(w)\neq0\right\}$. 
Then $P_m^w$ has $2$-weight $N-5$.  In particular, $P_m^w$ is a $2$-Weierstrass point if any only if $f^{(5)}(w)\neq0$.

In section five we compute the Wronskian $\Omega_q$ for $q=2$ in terms of coordinates in the hyperelliptic moduli for each case when the group $| G | >2$ and the $G$-locus is $> 0$ dimensional. Computations are challenging, especially in the case of $G \iso V_4$. In this case we make use of the dihedral invariants $\s_2, \s_3, \s_4$ which make such computations possible. Such invariants can be expressed in terms of the absolute invariants $t_1, \dots , t_6$ of binary octavics, which uniquely determine the isomorphism class of a genus 3 hyperelliptic curve; see \cite{hyp_mod_3} for details. 
Computations of the Wronskian is made easier by Lemma \ref{resultants}, which is a technical result of resultants for decomposable polynomials.  Such result is also helpful for all hyperelliptic curves when the reduced automorphism group is $| \bar G | > 2$; details will be explained in \cite{SS2}. 

A natural question is to extend methods of this paper to all hyperelliptic curves with extra automorphisms.  It seems as everything should follow smoothly as in the case of genus 3, other than the fact that computations will be more difficult.  The dihedral invariants of genus $g\geq 2$ hyperelliptic curves are defined in \cite{g-sh}.  A basis for the space of holomorphic differentials is known how to be constructed.  However, the computational aspects seem to be quite difficult.  This is studied in \cite{SS2}.   In the case of superelliptic curves (i.e. curves with equation $y^n = f(x)$) there has been some work done by Towse, see \cite{tw1, tw3}. A complete treatment of Weierstrass points of such curves with extra automorphisms  via their dihedral invariants is intended in \cite{SS3}. In both cases, the computation of the Wronskian is speeded up by Lemma \ref{resultants}.

By a curve we always mean a smooth, irreducible, algebraic curve defined over $\C$ or equivalently a compact Riemann surface.  A  curve $C$ will mean  the isomorphism class of $C$ defined over the field of complex numbers $\C$. Unless otherwise noted, a curve $C$ will denote a genus 3 hyperelliptic curve defined over $\C$.  

\section{Preliminaries}\label{sec:prelims}
Below we give the basic definitions of Weierstrass points  and establish some of the basic facts about the $q$-Weierstrass points of curves.

\subsection{Weierstrass points}
Following the notation of \cite{stichtenoth}, let $k$ be an algebraically closed field,  $C$ be a non-singular projective curve over $k$ of genus $g$, and  $k(C)$ its function field.  For any $f\in k(C)$,  $\div (f)$ denotes the divisor associated to $f$, $\div (f)_0$ and $\div (f)_\infty$ respectively the zero and pole divisors of $f$.  For any divisor $D$ on $C$, we have $D=\sum_{P\in C} n_P P$ for  $n_P \in \Z$ with almost all $n_P=0$.  Let $v_P(D)=n_P$, and let $v_P(f)=v_P(\div (f))$.

For any divisor $D$ on $C$, let $\L(D)=\{ f\in k(C) : \div (f)+D\geq0\}\cup\{0\}$ and $\ell(D)= \dim_k(\L(D))$.  By Riemann-Roch theorem, for any canonical divisor $K$, we have 
\[ \ell(D)-\ell(K-D)=\deg(D)+1-g.\]
Since the degree of a canonical divisor is $2g-2$, and since $\L(D)=\{0\}$ for any divisor $D$ with negative degree, if $\deg(D)\geq2g-1$, then $\deg(K-D)<0$, so $\ell(K-D)=0$.  Thus, if $\deg(D)\geq 2g-1$, then 
\[ \ell(D)=\deg(D)+1-g.\]
Let $P$ be a degree 1 point on $C$.  Consider the chain of vector spaces $$\L(0)\subseteq\L(P)\subseteq\L(2P)\subseteq\L(3P)\subseteq \dots \subseteq\L\left((2g-1)P\right).$$  Since $\L(0)=k$, we have $\ell(0)=1$.  And $\ell\left((2g-1)P\right)=g$.   We obtain the corresponding non-decreasing sequence of integers $$ \ell(0)=1, \ell(P), \ell(2P), \ell(3P), \dots, \ell\left((2g-1)P\right)=g.$$  It is straightforward to show that $0\leq \ell(nP)-\ell((n-1)P)\leq 1$ for all $n\in\mathbb{N}$.  If $\ell(nP)=\ell((n-1)P)$, then we call $n$ a \textit{Weierstrass gap number}.  For any point $P$, there are exactly $g$ Weierstrass gap numbers.  If the gap numbers are $1,2,\dots,g$, then $P$ is an \textit{ordinary point}.  Otherwise, we call $P$ a \textit{Weierstrass point}.  (Equivalently, we call $P$ a Weierstrass point if $\ell(gP)>1$.)

\subsection{$q$-Weierstrass points}
Using differentials, we can define $q$-Weierstrass points as in Chapter III. 5 of \cite{farkas}, as well as in \cite{FS}.  For any $q\in\mathbb{N}$, let $H^0(C,(\Omega^1)^q)$ be the $\mathbb{C}$-vector space of holomorphic $q$-differentials on $C$.    Let $d_q=\dim (H^0(C(\Omega^1)^q))$.  As an application of Riemman-Roch, by Proposition III.5.2 in \cite{farkas}, for $g\geq2$, one has 
\begin{equation}\label{d_q}
d_q = 
\begin{cases} 
g           & \text{if } q=1 \\ 
(g-1)(2q-1) & \text{if } q\geq2
\end{cases}
\end{equation}

As before, let $P$ be a degree 1 point on $C$.  Take a basis $\{\psi_1,\dots,\psi_{d_q}\}$ of $H^0(C,(\Omega^1)^q)$ such that 
\[ \ord_P(\psi_1)<\ord_P(\psi_2)<\cdots<\ord_P(\psi_{d_q}).\]
This can always be done, as in \cite[Section III.5]{farkas}.  For $i=1,\dots,d_q$, let $n_i=\ord_P(\psi_i)+1$.  The sequence of natural numbers $G^{(q)}(P)=\{n_1,n_2,\dots,n_{d_q}\}$ is called the \textit{$q$-gap sequence of $P$}.  With such a gap sequence, we can calculate the \textit{$q$-weight} of $P$, which is $$w^{(q)}(P)=\sum_{i=1}^{d_q}(n_i-i).$$  We call the point $P$ a \textit{$q$-Weierstrass point} if $w^{(q)}(P)>0$.

\def\W{\mathcal W}

Let $\W (C)$ denote the set of all Weierstrass points  and   $W_q(C)$  the set of all $q$-Weierstrass points on $C$.  In particular, $W_1(C)$, the set of $1$-Weierstrass points on $C$, is exactly the set of Weierstrass points as defined above in terms of Riemann-Roch. We summarize some properties in the following lemma.  

\begin{lemma} Let $C$ be a genus $g\geq 2$ curve.  The following hold:

i) There are $q$-Weierstrass points for any $q\geq 1$. 

ii)  For $q> 1$ 
\[ \sum_{P\in C} w^{(q)} (P) = g (g-1)^2 (2q-1)^2 \]

iii) $ 2g+2 \leq | W_1 (C) | \leq g^3-g $  
\end{lemma}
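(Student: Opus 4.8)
The plan is to prove the three items separately, each by a counting/Wronskian argument analogous to the classical theory of $1$-Weierstrass points.

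For item (i), I would argue that the $q$-Weierstrass points are precisely the zeros of a generalized Wronskian. Fix a basis $\{\psi_1,\dots,\psi_{d_q}\}$ of $H^0(C,(\Omega^1)^q)$. In a local coordinate $z$, write $\psi_i = f_i(z)\,(dz)^q$, and form the Wronskian $W(z) = \det\bigl(f_j^{(i-1)}(z)\bigr)_{1\le i,j\le d_q}$. A standard computation shows that under a change of coordinate $W$ transforms as a holomorphic $m$-differential for $m = q\,d_q + \binom{d_q}{2}$, hence $W \in H^0\bigl(C,(\Omega^1)^m\bigr)$. Since $d_q \ge 1$ and $C$ has genus $g \ge 2$, this space is nonzero, so $W$ is a nonzero holomorphic $m$-differential and therefore has $\deg\bigl((\Omega^1)^m\bigr) = m(2g-2) > 0$ zeros counted with multiplicity; the order of vanishing of $W$ at $P$ equals $w^{(q)}(P)$. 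Hence there is at least one $q$-Weierstrass point. This same setup gives item (ii): summing $w^{(q)}(P)$ over all $P$ gives $\deg\bigl((\Omega^1)^m\bigr) = m(2g-2)$, and substituting $d_q = (g-1)(2q-1)$ for $q>1$ yields
\[
m(2g-2) = \left( q(g-1)(2q-1) + \binom{(g-1)(2q-1)}{2} \right)(2g-2).
\]
One then checks the elementary algebraic identity that this equals $g(g-1)^2(2q-1)^2$; writing $N = (g-1)(2q-1)$, the bracket is $qN + \tfrac{N(N-1)}{2} = \tfrac{N}{2}(2q + N - 1)$, and since $N - 1 = (g-1)(2q-1) - 1$ we get $2q + N - 1 = 2q + (g-1)(2q-1) - 1 = (2q-1) + (g-1)(2q-1) = g(2q-1)$, so the bracket is $\tfrac{N \cdot g(2q-1)}{2}$ and multiplying by $(2g-2) = 2(g-1)$ gives $N g (2q-1)(g-1) = (g-1)(2q-1)\cdot g(2q-1)(g-1) = g(g-1)^2(2q-1)^2$, as claimed.

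For item (iii), the upper bound $|W_1(C)| \le g^3 - g$ is the case $q=1$ of the degree count: the classical Wronskian of a basis of $H^0(C,\Omega^1)$ is a holomorphic differential of weight $m = g + \binom{g}{2} = \tfrac{g(g+1)}{2}$, so the total weight is $\tfrac{g(g+1)}{2}(2g-2) = g(g-1)(g+1) = g^3-g$, and since each Weierstrass point contributes weight at least $1$, there are at most $g^3 - g$ of them. For the lower bound $|W_1(C)| \ge 2g+2$, I would invoke the standard fact that the weight of a Weierstrass point is at most $\binom{g}{2} = \tfrac{g(g-1)}{2}$ (the maximal possible weight given that the gap sequence is a subset of $\{1,\dots,2g-1\}$ of size $g$), so the number of Weierstrass points is at least $\frac{g^3-g}{\,g(g-1)/2\,} = \frac{2(g^2-1)}{g-1} = 2(g+1) = 2g+2$.

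The main obstacle here is not conceptual but bookkeeping: verifying carefully that the Wronskian is coordinate-independent up to the correct automorphy factor (the transformation law $W \mapsto (dz'/dz)^{-m} W$), and that its local order of vanishing is exactly $w^{(q)}(P)$ rather than merely bounded by it — this requires choosing the basis adapted to $P$ as in \cite[Section III.5]{farkas} and tracking how row operations on the Wronskian matrix realize the gap sequence. All three parts are essentially corollaries once this Wronskian formalism is in place, so I would state the transformation law as a lemma (citing \cite{farkas}) and then deduce (i), (ii), (iii) in a few lines each.
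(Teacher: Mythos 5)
Your proposal is correct, and it is considerably more self-contained than the paper's own proof, which simply refers all three items to \cite[Section III.5]{farkas} and only carries out the substitution $d_q=(g-1)(2q-1)$ into the cited total-weight formula $\sum_{P}w^{(q)}(P)=(g-1)\,d_q\,(2q-1+d_q)$ to obtain item (ii). Your Wronskian degree count is precisely the argument behind that cited formula: your weight $m=q\,d_q+\binom{d_q}{2}=\tfrac{d_q}{2}(2q-1+d_q)$ agrees with the exponent the paper assigns to $\Omega_q$ in its Wronskian subsection, and your algebraic reduction to $g(g-1)^2(2q-1)^2$ and the upper bound $g^3-g$ in (iii) both check out. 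The one point you should repair is the parenthetical justification of the bound $w^{(1)}(P)\le\binom{g}{2}$: it does \emph{not} follow merely from the gap sequence being a size-$g$ subset of $\{1,\dots,2g-1\}$, since the subset $\{g,g+1,\dots,2g-1\}$ would give weight $g(g-1)$, twice your bound. The correct argument uses that $1$ is always a gap and that the non-gaps form a semigroup (if $a$ and $b$ are non-gaps at $P$, so is $a+b$), from which the standard estimate $w^{(1)}(P)\le g(g-1)/2$ follows. Since you invoke this only as a known fact, the lower bound $|W_1(C)|\ge 2g+2$ still stands, but the gloss as written would not survive as a proof.
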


\proof The proofs can be found in \cite[Section III.5]{farkas}.  In particular, we know that 

%
\[ \sum_{P\in C} w^{(q)} (P) = (g-1) d (2q -1 +d) \]
where $d=d_q$ as in Eq.~\eqref{d_q}. Substituting for $d$ we get the result as claimed in ii). 

\qed

\noindent Now we give some results specific to the $g=3$ case.

\begin{exa}[Genus 3 curves]
For $g=3$ we have $d_q=2 (2q-1)$. The total weight  is 24 for $q=1$ and for $q> 1$ is 
\[ \sum_{P\in C} w^{(q)} (P) = 12 (2q-1)^2.\]
Notice that for $q=2$ we have $d_2= 6$ and the total weight is 108.  For  $q=3$, $d_3= 10$ and  the total weight is 300. In these cases we have, respectively, a $6\times6$ and a $10 \times 10$ Wronskian.
\end{exa}

\noindent In Section~\ref{sec:classification-q-w-points}, we give the following result for $q=2$,  cf. Remark 3. 

\begin{rem} Let $C$ be a genus 3 hyperelliptic curve.  
For  any point $P \in C$,  the $2$-weight of $P$ is $w^{(2)}(P)\leq6.$  Further, if $w^{(2)}(P)=6$, then $P\in W_1(C)$.  If $P\notin W_1(C)$, then $w^{(2)}(P)\leq3$.
\end{rem}

\subsection{The Wronskian}
Given a basis $\{\psi_1,\dots,\psi_{d_q}\}$ of $H^0(C,(\Omega^1)^q)$, where $\psi_i=f_i(x)dx$ for a holomorphic function $f_i$ of a local coordinate $x$ for each $i$, the \textit{Wronskian} is the determinant of the following $d_q\times d_q$ matrix:
$$W=\left|
\begin{matrix}
f_1(x) & f_2(x) & \cdots & f_{d_q}(x) \\
f_1'(x) & f_2'(x) & \cdots & f_{d_q}'(x) \\
\vdots & \vdots & \ddots & \vdots \\
f_1^{(d_q-1)}(x) & f_2^{(d_q-1)}(x) & \cdots & f_{d_q}^{(d_q-1)}(x)
\end{matrix}
\right|.$$  The Wronskian form is $\Omega_q=W(dx)^m$, for 
\begin{center}
$\begin{array}{rcl}
m&=&q+(q+1)+(q+2)+\dots+(q+d_q-1) \\ &=& (d_q/2)(2q-1+d_q).
\end{array}$
\end{center}
The following holds and its proof can be found everywhere in the literature. :
\begin{rem}
$P$ is a $q$-Weierstrass point with weight $w^{(q)} (P)= r$ if and only if  $P$ is a zero of multiplicity $r$ for the differential form $\Omega_q$ or equivalently in the support of $\div (\O_q)$.
\end{rem}

Since the Wronskian form is a holomorphic $m$-differential, div$(\Omega_q)$ is effective.  Thus, the $q$-Weierstrass points are the support of $\div (\Omega_q)$, and the sum of the $q$-weights of the $q$-Weierstrass points is the degree of  div$(\Omega_q)$, which is $m(2g-2)=d_q(2q-1+d_q)(g-1)$.  In particular, this means there are a finite number of $q$-Weierstrass points.




\section{Genus 3 hyperelliptic fields with extra automorphisms}

Let $K$ be a genus 3 hyperelliptic field. Then $K$ has exactly one genus 0 subfield of degree 2, call it $k(X)$.  It is the fixed field of the \textbf{hyperelliptic involution} $\z_0$ in $\Aut (K)$. Thus, $\z_0$ is central in $\Aut (K)$, where  $\Aut (K)$ denotes the group $\Aut (K/k)$. It induces a subgroup of $\Aut (k(X))$ which is naturally isomorphic to $\bAut (K):= \Aut (K)/\<\z_0\>$. The latter is called the \textbf{reduced automorphism group} of $K$.

An \textbf{extra involution} (or non-hyperelliptic)  of $G = \Aut (K)$  is  an  involution different from $\z_0$. Thus,  the extra involutions of $G$ are in  1-1 correspondence with the non-hyperelliptic subfields of $K$ of degree 2. 

Let  $\e$ be an extra involution in $\G$. We can choose the generator $X$ of $\mbox{Fix}(\z_0)$      such that $\e(X)=-X$.  Then $K=k(X,Y)$ where $X, Y$ satisfy equation
\begin{equation}\label{eq_1} Y^2 = (X^2-\a_1^2) (X^2-\a_2^2) (X^2-\a_3^2) (X^2-\a_4^2)  \end{equation}
for some $\a_i \in k$, $i=1, \dots, 4$.  Denote by
\begin{equation}
\begin{split}
s_1 = & - \left( \a_1^2 + \a_2^2 + \a_3^2 + \a_4^2 \right) \\
s_2 = & \,  (\a_1 \a_2)^2 + (\a_1 \a_3)^2 + (\a_1 \a_4)^2 + (\a_2 \a_3)^2 + (\a_2 \a_4)^2 + (\a_3 \a_4)^2 \\
s_3 = & -   ( \a_1\,\a_2\,\a_3 )^2 - (\a_4\,\a_1\,\a_2)^2 - (\a_4\,\a_3\,\a_1)^2 - (\a_4\,\a_3\,\a_2)^2 \\
s_4 = & - \left( \a_1 \a_2 \a_3 \a_4 \right)^2\\
\end{split}
\end{equation}
Then,  we have 
\[ Y^2=X^8+s_1 X^6+ s_2 X^4+ s_3 X^2+s_4  \]
with $s_1, s_2, s_3, s_4 \in k$, $s_4\ne 0$. Further $E_1=k(X^2,Y)$ and $ C=k(X^2, YX)$ are the two  subfields corresponding to $\e$ of genus 1 and 2 respectively.   

\par Preserving the condition $\e(X)=-X$ we  can further modify $X$ such that  $s_4=1$. Then, we have the following lemma, which  is proven in \cite{b-th}.

\begin{lemma}  Every genus 3 hyperelliptic curve $\X$,  defined over a field $k$, which has an non-hyperelliptic involution has equation 
\begin{equation}\label{eq3}
Y^2=X^8+a X^6+ b X^4+ c X^2+1
\end{equation}
for some $a, b, c \in k^3$,  where the polynomial on the right has non-zero discriminant.
\end{lemma}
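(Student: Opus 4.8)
The plan is to start from the model obtained just before the statement, namely the equation
\[
Y^2 = X^8 + s_1 X^6 + s_2 X^4 + s_3 X^2 + s_4, \qquad s_4 \neq 0,
\]
which was derived by choosing the generator $X$ of $\mbox{Fix}(\z_0)$ so that the extra involution $\e$ acts by $X \mapsto -X$, forcing the right-hand side to be an even polynomial of degree $8$ (degree exactly $8$ because $\X$ has genus $3$, so the branch divisor of the degree-$2$ map has $8$ points, all finite after a suitable coordinate choice). So the only thing left to prove is that one may further rescale $X$ to normalize $s_4$ to $1$, and then to argue the discriminant condition.

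First I would perform the substitution $X \mapsto \l X$ for a nonzero scalar $\l \in \bar k$ (it suffices to work over $\bar k$ since we are classifying isomorphism classes over $\C$, but in fact $\l$ can be taken in $k$ after an honest fourth-root extraction; if one insists on staying over $k$ one notes the statement is about curves over $\C$, cf. the conventions fixed in the introduction). Under $X \mapsto \l X$ the equation becomes $Y^2 = \l^8 X^8 + s_1 \l^6 X^6 + s_2 \l^4 X^4 + s_3 \l^2 X^2 + s_4$; dividing $Y$ by $\l^4$, i.e. replacing $Y$ by $\l^4 Y$, gives $Y^2 = X^8 + (s_1/\l^2)X^6 + (s_2/\l^4)X^4 + (s_3/\l^6)X^2 + s_4/\l^8$. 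Choosing $\l$ with $\l^8 = s_4$ (possible since $s_4 \neq 0$) normalizes the constant term to $1$, and setting $a = s_1/\l^2$, $b = s_2/\l^4$, $c = s_3/\l^6$ yields Equation~\eqref{eq3}. One should also check that this substitution preserves the normalization $\e(X) = -X$ — it does, since scaling $X$ commutes with negating $X$ — so we have not destroyed the extra-involution structure.

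Finally, for the discriminant claim: the curve $\X$ is by hypothesis a smooth genus $3$ curve, and a plane model $Y^2 = f(X)$ with $\deg f = 8$ is smooth (as a hyperelliptic curve, i.e. its smooth projective model has genus $3$) precisely when $f$ has no repeated roots, equivalently $\operatorname{disc}(f) \neq 0$. Since scaling $X$ and $Y$ are invertible, the normalized $f(X) = X^8 + aX^6 + bX^4 + cX^2 + 1$ inherits the property of having distinct roots, hence non-zero discriminant. I would phrase this as: the eight roots of the right-hand side are $\pm\a_1/\l, \dots, \pm\a_4/\l$ and these remain distinct after scaling, which is what is needed for the genus to be $3$.

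The only genuinely delicate point is the field of definition of the scalar $\l$: a clean statement over an arbitrary $k$ requires $s_4$ to be an eighth power in $k$, which need not hold, so strictly speaking the normalization $s_4 = 1$ is attained only after a field extension (or by using that we allow isomorphisms of curves, not isomorphisms fixing $k$ pointwise — consistent with the paper's convention that a curve means its isomorphism class over $\C$). I expect the main obstacle in writing this up carefully is simply being honest about that extension while keeping the statement as in \cite{b-th}; everything else is the routine scaling computation sketched above. Since the lemma is attributed to \cite{b-th}, the cleanest exposition is to reduce to the pre-normalized even octavic (already justified in the excerpt) and then cite or reproduce the one-line scaling argument, noting the discriminant condition is equivalent to smoothness of the genus $3$ model.
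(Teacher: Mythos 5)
Your argument is correct and is essentially the one the paper intends: the text preceding the lemma already derives the even octavic $Y^2=X^8+s_1X^6+s_2X^4+s_3X^2+s_4$ with $s_4\neq 0$, and the remaining normalization $s_4=1$ by rescaling $X$ (with the discriminant condition being smoothness of the genus~3 model) is exactly the step the paper attributes to \cite{b-th}. Your caveat about extracting a root of $s_4$ is reasonable but moot here, since the paper's standing convention is that $k$ is algebraically closed (indeed $k=\C$).
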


The above  conditions  determine $X$ up to coordinate change by the group $\< \tau_1, \tau_2\>$ where  
$ \tau_1: X\to \zz_8 X$,  and $ \tau_2: X\to \frac 1 X$,  and  $\zz_8$ is a primitive 8-th root of unity in  $k$. 
Hence,  
\[\tau_1 : \, (a, b, c) \to \left(\zz_8^6 a,  \zz_8^4 b,  \zz^2 c \right) \quad \textit{ and } \quad  \tau_2 : \, (a, b,  c) \to \left(c, b, a \right).\]
Then, $| \tau_1 | =4$ and $|\tau_2 | =2$.  The group generated by $\tau_1$ and $\tau_2$ is the dihedral group of order 8.   Invariants of this action are
\begin{equation}\label{eq2}
\s_2   =\,a\,c,  \quad  \s_3   =(a^2+c^2)\,b,  \quad   \s_4   =a^4+c^4, 
\end{equation}
since 
\[ 
\begin{split}
& \tau_1 (a^4+c^4) = (\zz_8^6 a)^4 + (\zz_8^2 c)^4 = a^4 + c^4 \\
& \tau_1 \left(   (a^2 + c^2 ) b \right) = \left( \zz_8^4 a^2 + \zz_8^4 c^2   \right) \cdot ( \zz_8^4 b) = (a^2+c^2) b \\
& \tau_1 ( a c ) =  \zz_8^6 a \cdot \zz_8^2 c =  a c \\
\end{split}
\] 
Since they are symmetric in $a$ and $c$, then they are obviously invariant under $\t_2$. 
Notice that $\s_2, \s_3, \s_4$ are homogenous polynomials of degree 2, 3, and 4 respectively.  The subscript $i$ represents the degree of the polynomial $\s_i$.

Since the above transformations are automorphisms of the projective line $\mathbb P^1 (k)$ then the $SL_2(k)$ invariants must be expressed in terms of $\s_4, \s_3$, and $\s_2$. 
In these parameters, the discriminant $\D (\s_2, \s_3, \s_4) $  of the octavic polynomial on the right hand side of  Eq.~ \eqref{eq3} is nonzero.

 The map \[(a, b, c) \mapsto (\s_2, \s_3, \s_4)\] is a branched Galois covering with group $D_4$  of the set 
\[ \{  (\s_2, \s_3, \s_4)\in k^3 : \Delta_{(\s_2, \s_3, \s_4)} \neq 0\}\]
by the corresponding open subset of $a, b, c$-space. In any case, it is true that if $a, b, c$ and $a', b', c'$ have the same $\s_2, \s_3, \s_4$-invariants then they are conjugate under $\< \tau_1, \tau_2\>$.


Let $C$ be a genus 3 hyperelliptic curve defined over $\C$, $K$ its function field, and $G$ be the full automorphism group  $G:= \Aut (K)$. The loci of genus 3 hyperelliptic  curves $C$ with full automorphism group $G$ are studied in \cite{g-sh-s, b-th}.  All such groups $G$ have distinct ramification structures and therefore there is no confusion to denote such locus $\H(G)$ for any fixed $G$.
In this paper we will make use of the following facts, which are proven in \cite{b-th}. 

\begin{thm} Let $C$ be a genus 3 hyperelliptic curve such that $| \Aut (C) | > 2$ and $\dim \H \left(\Aut (C) \right) \geq 1$. Then, one of the following holds:

i) $\Aut (C) \iso V_4$ and the locus $\H (V_4)$ is 3-dimensional.  A generic curve in this locus has equation 
\begin{equation} y^2 = A \, x^8 + \frac {A} {\s_4 + 2\s_2^2} \, \, x^6  +  \frac {\s_3  ( A + \s_2^2) } {( \s_4 + 2\s_2^2 )^3} \, \, x^4  +  \frac { \s_2} {(\s_4 + 2\s_2^2)^3} \, \, x^2 + \frac {1} {(\s_4 + 2\s_2^2)^4} \end{equation}
where $A$ satisfies   $A^2 -  \s_4 A + \s_2^4 =0$. 

ii) $\Aut (C) \iso \Z_2^3 $ and the locus $\H (\Z_2^3)$ is 2-dimensional.  A generic curve in this locus has equation 
\begin{equation} y^2= \s_2^2 x^8 + \s_2^2 x^6 + \frac 1 2 \, \s_3 x^4 + \s_2 x^2 + 1. \end{equation}

iii) $\Aut (C) \iso \Z_2\times D_8 $  and the locus $\H (\Z_2 \times D_8)$ is 1-dimensional.  A generic curve in this locus has equation 
\begin{equation} y^2= t x^8 + t x^4 +1. \end{equation}

iv) $\Aut (C) \iso D_{12} $  and the locus $\H (D_{12})$ is 1-dimensional.  A generic curve in this locus has equation 
\begin{equation} y^2= x \, ( tx^6 + t x^3 +1) \end{equation}

v) $\Aut (C) \iso \Z_2 \times \Z_4 $ and the locus $\H (\Z_2 \times \Z_4)$ is 1-dimensional.  A generic curve in this locus has equation  
\begin{equation} y^2 = \left( t x^4-1 \right) \, \left( t x^4 + tx^2+1\right) \end{equation}

\end{thm}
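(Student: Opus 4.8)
The plan is to classify the possible full automorphism groups by first passing to the reduced automorphism group $\bAut(K) = \Aut(C)/\langle\z_0\rangle$: since $\z_0$ is central of order $2$, the group $\Aut(C)$ is a central extension of $\bAut(K)$ by $\langle\z_0\rangle$ with $|\Aut(C)| = 2\,|\bAut(K)|$. Now $\bAut(K)$ is a nontrivial finite subgroup of $\Aut(\mathbb{P}^1) = PGL_2(\C)$ acting on the $2g+2 = 8$ branch points of $C\to\mathbb{P}^1$ (the zeros of the defining polynomial, with $\infty$ adjoined if that polynomial has degree $7$); hence $\bAut(K)$ is cyclic, dihedral, $A_4$, $S_4$ or $A_5$, and these five shapes for $\bAut(K)$ are the only ones to consider.

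Next I would bound the dimension of each stratum by Riemann--Hurwitz. Writing $G = \Aut(C)$ and letting $(m_1,\dots,m_r)$ be the signature of the quotient map $C\to C/G$ --- which has genus $0$, being dominated by $C/\langle\z_0\rangle \iso \mathbb{P}^1$ --- one has
\[
2g-2 \;=\; |G|\Bigl(-2 + \sum_{i=1}^{r}\bigl(1-\tfrac{1}{m_i}\bigr)\Bigr), \qquad g = 3,
\]
and the locus of such curves, a union of Hurwitz strata (one per braid orbit of generating tuples), has dimension $r-3$; so the hypothesis $\dim\H(G)\geq 1$ forces $r\geq 4$. Running over the finitely many pairs $(G,\text{signature})$ in which $G$ is a central extension of an admissible reduced group with $r\geq 4$ --- the groups $A_4$, $S_4$, $A_5$ and the larger cyclic and dihedral reduced groups drop out because the identity above forces $r\leq 3$ for them --- leaves exactly the five families of the statement, with reduced groups $\Z_2$, $\V$, $D_8$, $S_3$, $\V$ and signatures $(2^6)$, $(2^5)$, $(4,2,2,2)$, $(6,2,2,2)$, $(4,4,2,2)$ in cases (i)--(v) respectively. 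The detailed bookkeeping (which braid orbits yield hyperelliptic curves, and the precise dimension of each stratum) is the content of \cite{b-th}.

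Then I would produce the normal forms. Conjugating each admissible reduced group into standard generators of $PGL_2(\C)$ (such as $x\mapsto -x$, $x\mapsto\zz_8 x$, $x\mapsto 1/x$), the requirement that the defining octavic $f$ be semi-invariant under those generators forces the shape of $f$; the surviving coefficients are normalized by the action of $N_{PGL_2(\C)}(\bAut(K))$ together with the rescaling $y\mapsto\mu y$, and $\Delta\neq 0$ keeps us inside the genuine genus-$3$ locus. For $\bAut(K)\iso\Z_2$ this recovers the equation \eqref{eq3}, and case (i) follows by re-expressing $a,b,c$ through the invariants of \eqref{eq2} --- for instance $a^4$ and $c^4$ are the two roots of $T^2-\s_4 T+\s_2^4$, which yields the element $A$ and the displayed equation in $\s_2,\s_3,\s_4$ --- while the other four equations come out the same way. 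To pin down $G$ itself I would lift each generator $x\mapsto\phi(x)$ of $\bAut(K)$ to the automorphism $(x,y)\mapsto(\phi(x),\psi(x)\,y)$ with $\psi(x)^2 = f(\phi(x))/f(x)$ (two lifts, differing by $\z_0$), and read off the relations among a chosen set of lifts, obtaining $\V$, $\Z_2^3$, $\Z_2\times D_8$, $D_{12}$, $\Z_2\times\Z_4$ in cases (i)--(v). Genericity is then immediate, since inside each $\H(G)$ the locus on which $\Aut(C)$ is strictly larger is the finite union of the other loci, each of strictly smaller dimension.

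The hard part is twofold. First, the completeness and realizability of the Riemann--Hurwitz enumeration: one must verify that every admissible pair $(G,\text{signature})$ with $r\geq 4$ is realized by a connected cover whose generic member is hyperelliptic of genus $3$ with automorphism group exactly $G$, that the several braid orbits for a given $(G,\text{signature})$ are correctly distinguished, and that the excluded groups really cannot reach $r\geq 4$. Second, the extension-class computation: cases (ii) and (v) share the reduced group $\V$, and distinguishing the split extension $\Z_2^3$ from $\Z_2\times\Z_4$ requires computing the multipliers $\psi$ explicitly and noticing that in the latter case $\psi$ unavoidably involves $\sqrt{-1}$, so that one lift of an extra involution squares to $\z_0$ rather than to the identity. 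Both points are settled in \cite{b-th}, on which the statement rests.
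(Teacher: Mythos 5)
The paper does not actually prove this theorem: it is quoted as a fact established in \cite{b-th}, so there is no in-house argument to compare against. Your strategy (reduced group $\bAut(K)\subset PGL_2(\C)$, Riemann--Hurwitz with genus-$0$ quotient, $\dim = r-3$, normal forms from semi-invariance, then the central-extension analysis to recover $G$ from $\bAut(K)$) is the standard route and is indeed how such classifications are carried out, and your signature and dimension counts for the five listed cases all check out.

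There is, however, a concrete gap at the step ``running over the finitely many pairs \dots leaves exactly the five families of the statement.'' Your own filters do not close on five cases: the reduced group $\Z_2=\langle x\mapsto -x\rangle$ admits \emph{two} lifts to a degree-$2$ central extension, and you only account for the split one. If the branch divisor is $\{0,\infty\}\cup\{\pm\sqrt{r_1},\pm\sqrt{r_2},\pm\sqrt{r_3}\}$, i.e.\ $C: y^2=x(x^6+ax^4+bx^2+c)$, then $f(-x)=-f(x)$, the lift is $(x,y)\mapsto(-x,iy)$, it squares to the hyperelliptic involution, and the generic such curve has $\Aut(C)\iso\Z_4$ with signature $(2,2,2,4,4)$, $r=5$, hence a $2$-dimensional locus $\H(\Z_4)$. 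This family satisfies every hypothesis of the theorem ($|\Aut(C)|=4>2$, $\dim\H(\Aut(C))=2\geq 1$) and every condition you impose, yet $\Z_4$ does not appear among (i)--(v). So either your enumeration must explain why this case is excluded --- it cannot, since the family genuinely exists --- or the statement must be read with the additional hypothesis, implicit in the paper's focus, that $C$ carries a non-hyperelliptic (extra) \emph{involution}; $\Z_4$ is the unique case killed by that hypothesis, since its only involution is the hyperelliptic one. As written, your proposal asserts completeness of the list exactly at the point where the assertion fails, and this is not a detail that can be delegated to \cite{b-th}, because it changes what the correct statement of the theorem is.
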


\begin{rem} i) Notice that in each case of the above Theroem, it is assumed that the discriminant of the polynomial in $x$ is not zero.

ii)  Other than the case i) in all other cases the field of moduli is a field of definition. The equations provided in the above Theorem give a rational model of the curve over its field of moduli. In other words, the dihedral invariants $\s_2 \s_3, \s_4$ or $t$ are uniquely determined as rational functions in terms of the absolute invariants $t_1, \dots , t_6$. 
\end{rem}


\section{Classification of $2$-Weierstrass points for genus 3 hyperelliptic curves}\label{sec:classification-q-w-points}

Let $C$ be a hyperelliptic curve of genus $g=3$ with non-hyperelliptic involution, as in Eq.~\eqref{eq3}.  As in Eq.~\eqref{eq_1}, let $\{\pm\alpha_1, \pm\alpha_2, \pm\alpha_3, \pm\alpha_4\}$ denote the 8 distinct roots of $f(x)$, and denote the corresponding ramification points on $C$ by $R_{i}^\pm=(\pm\alpha_i,0)$.  Throughout this section, let $w\in\mathbb{C}$ denote any non-root of $f(x)$, and let $P_1^w$ and $P_2^w$ denote the two (distinct) points above $w$.  And let $P_1^\infty$ and $P_2^\infty$ denote the two points over $\infty$ in the non-singular model of $C$.

Here are the divisors associated to the differential $dx$ and some functions:
\bigskip

\begin{center}
$\begin{array}{|l|l|} \hline & \\
\div (dx)=\displaystyle\left(\sum_{i=1}^4 R_i^{\pm}\right) - 2(P_1^\infty+P_2^\infty) & 
\div (y)=\displaystyle\left(\sum_{i=1}^4 R_i^{\pm}\right) - 4(P_1^\infty+P_2^\infty) \\ & \\ \hline & \\
\div (x- (\pm \alpha_i)) = 2R_i^\pm - (P_1^\infty+P_2^\infty) &
\div (x-w) = P_1^w+P_2^w - (P_1^\infty+P_2^\infty) \\ & \\ 

 \hline
\end{array}$\end{center}\bigskip

Consider $H^0(C,(\Omega^1)^q)$, the space of holomorphic $q$-differentials on $C$.  For a curve of genus $g$, by Riemann-Roch one has that $\dim(H^0(C,(\Omega^1)^1))=g$ and, for $g\geq2$ and $q\geq2$, 
\[\dim(H^0(C,(\Omega^1)^q))=(g-1)(2q-1).
\]
In particular, for $g=3$, when $q\geq2$, $d_q=\dim(H^0(C,(\Omega^1)^q))=2 (2q-1).$

\begin{thm}\label{thm:basis-for-q-differentials}
Let $C$ be a hyperelliptic curve of genus $g=3$ given by the equation $y^2=f(x)$ with $\deg(f(x))=8$.  Then one has the following bases of holomorphic $q$-differentials.

For $q=1$, a basis for $H^0(C,(\Omega^1)^1)$ is
\[
B_{1,\beta}=\left\{\frac{(x-\beta)^j}{y}dx : 0\leq j\leq 2\right\}
\]
for any $\beta\in\mathbb{C}$.

For $q=2$, a basis for $H^0(C,(\Omega^1)^2)$, is
\[
B_{2,\beta}=\left\{\frac{(x-\beta)^j}{y^2}(dx)^2 : 0\leq j\leq 4 \right\} \cup \left\{\frac{(y-f_{\beta,4,m}(x))}{y^2}(dx)^2\right\}
\]
for any $\beta\in\mathbb{C}$ and $m\in\{1,2\}$, with $f_{\pm\alpha_i,4,m}(x)=0$ and $f_{w,4,m}(x)$ the degree-$4$ Taylor polynomial for $C$ at $P_m^w$.

For $q\geq2$, a basis for $H^0(C,(\Omega^1)^q)$, is
\[
B_{q,\beta}=\left\{\frac{(x-\beta)^j}{y^q}(dx)^q : 0\leq j\leq 2q \right\} \cup \left\{\frac{(x-\beta)^jy}{y^q}(dx)^q : 0\leq j\leq 2q-4  \right\}
\]
for any $\beta=\pm\alpha_i$.
\end{thm}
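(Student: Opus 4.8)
The plan is to verify, for each of the three claimed families $B_{1,\beta}$, $B_{2,\beta}$, and $B_{q,\beta}$, that the listed $q$-differentials (i) are holomorphic on $C$, and (ii) are linearly independent and have the correct cardinality $d_q$. Since $d_q = \dim H^0(C,(\Omega^1)^q)$ is already known from Riemann--Roch (Equation~\eqref{d_q}), once both (i) and (ii) are checked for a set of the right size, it must be a basis. The cardinalities are immediate: $|B_{1,\beta}| = 3 = g$; $|B_{2,\beta}| = 5 + 1 = 6 = 2(2\cdot2-1)$; and $|B_{q,\beta}| = (2q+1) + (2q-3) = 4q-2 = 2(2q-1)$ for $q\geq 2$. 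Linear independence within each family is also easy: the elements have pairwise distinct orders of vanishing at a chosen point (say $P_1^\infty$, or a ramification point $R_i^\pm$), using the divisor table for $dx$, $y$, and $x - \beta$ given just before the statement; a differential form with a strictly increasing sequence of orders at a point is automatically independent.

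The real content is holomorphy, and the plan is to compute $\div$ of each proposed $q$-differential explicitly from the divisor table. For $q=1$: from $\div(dx) = \sum R_i^\pm - 2(P_1^\infty + P_2^\infty)$ and $\div(y) = \sum R_i^\pm - 4(P_1^\infty + P_2^\infty)$, we get $\div(dx/y) = 2(P_1^\infty + P_2^\infty) \geq 0$, and multiplying by $(x-\beta)^j$ with $0 \le j \le 2$ subtracts at most $2(P_1^\infty+P_2^\infty)$ at infinity while adding an effective divisor elsewhere, so the result is still holomorphic. For $q=2$: $\div((dx)^2/y^2) = 4(P_1^\infty+P_2^\infty) \ge 0$, so the first $5$ elements $(x-\beta)^j (dx)^2/y^2$ with $0 \le j \le 4$ are holomorphic by the same bookkeeping at infinity. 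For the extra element $(y - f_{\beta,4,m}(x))(dx)^2/y^2$, the key point is that at the point $P_m^w$ (when $\beta = w$) the numerator $y - f_{w,4,m}(x)$ vanishes to order $\ge 5$ because $f_{w,4,m}$ is the degree-$4$ Taylor polynomial of the branch $y$ at $P_m^w$ in the local coordinate $x - w$; this order-$5$ vanishing compensates the pole of $1/y^2 \cdot (dx)^2$ coming from... actually $1/y^2$ has no pole at a non-branch point, so one must instead track behavior at infinity, where $y - f_{\beta,4,m}(x)$ has a pole of order $\le 8$ (from $y$) against the zero of order $8$ of $1/y^2$ plus the $4(P_1^\infty+P_2^\infty)$ from $(dx)^2$; the degree-$4$ truncation is exactly what keeps the polynomial part from making the pole worse than $y$ itself. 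One must also confirm that for $\beta = \pm\alpha_i$ the convention $f_{\pm\alpha_i,4,m} = 0$ makes the sixth element $y(dx)^2/y^2 = (dx)^2/y$ holomorphic, which follows from $\div((dx)^2/y) = 2(P_1^\infty+P_2^\infty) + \sum R_i^\pm \ge 0$. For general $q \ge 2$ with $\beta = \pm\alpha_i$: $\div((dx)^q/y^q) = 2q(P_1^\infty+P_2^\infty) \ge 0$ and $\div(y(dx)^q/y^q) = \div((dx)^q/y^{q-1}) = 2q(P_1^\infty+P_2^\infty) - 4(P_1^\infty+P_2^\infty) + \sum R_i^\pm = (2q-4)(P_1^\infty+P_2^\infty) + \sum R_i^\pm$, which is effective precisely when $q \ge 2$; multiplying by $(x-\beta)^j$ for $j$ in the stated ranges uses up exactly the available poles at infinity, and the condition $\beta = \pm\alpha_i$ ensures the factor $x - \beta$ has a double zero at $R_i^\pm$ so nothing goes wrong there.

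I expect the main obstacle to be the careful analysis of the ``exceptional'' generator $\frac{(y - f_{\beta,4,m}(x))}{y^2}(dx)^2$ in the $q=2$ case: one must simultaneously check its order at $P_m^w$ (controlled by the Taylor condition, giving a zero there rather than a pole), its order at the other point $P_{m'}^w$ over the same $w$ where $y$ has the opposite sign (here $y - f_{w,4,m}$ does \emph{not} vanish, and one needs the element to still be holomorphic — which it is, since $1/y^2$ and $(dx)^2$ are both holomorphic at a non-branch point), and its order at $P_1^\infty, P_2^\infty$ (where the degree-$4$ truncation bound is exactly sharp against the available $4(P_1^\infty+P_2^\infty)$). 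A secondary subtlety is making the independence argument for $B_{2,\beta}$ genuinely work: the first five elements have distinct orders at infinity, but the sixth must be shown not to lie in their span — most cleanly by evaluating orders at $P_m^w$ itself, where the first five are nonzero (order $0$) and suitably scaled but the sixth vanishes to order $\ge 5$, or alternatively by a direct determinant/Wronskian nonvanishing argument. Once holomorphy and independence are in hand, counting against $d_q$ closes each case.
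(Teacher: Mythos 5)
Your proposal is correct and follows essentially the same route as the paper: count the elements against $d_q$, verify holomorphy by computing divisors from the table, and get linear independence from pairwise distinct vanishing orders at a well-chosen point ($R_i^\pm$ for $\beta=\pm\alpha_i$, and $P_m^w$ for $\beta=w$, where the orders are $0,1,2,3,4$ and $\geq 5$ — note your parenthetical ``order $0$'' for all of the first five is a slip, but your stated remedy of evaluating at $P_m^w$ is exactly what the paper does). The handling of the exceptional generator $(y-f_{w,4,m}(x))(dx)^2/y^2$, including the pole bound $v_{P_i^\infty}(y-f_{w,4,m}(x))\geq -4$ at infinity, matches the paper's argument.
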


\begin{proof}
A simple count shows that these bases have the correct number of elements.  To prove this theorem, we must show that the basis elements are linearly independent.  It will suffice to show that the $q$-differentials are holomorphic and have different orders of vanishing at at a particular point.

For $q=1$, the divisors associated to the $1$-differentials in $B_{1,\beta}$ are:
\begin{itemize}
\item $\div \left(\frac{(x-\alpha_i)^j}{y}dx\right)=2jR_i^++(2-j)(P_1^\infty+P_2^\infty)$,
\item $\div \left(\frac{(x+\alpha_i)^j}{y}dx\right)=2jR_i^-+(2-j)(P_1^\infty+P_2^\infty)$,
\item $\div \left(\frac{(x-w)^j}{y}dx\right)=j(P_1^w+P_2^w)+(2-j)(P_1^\infty+P_2^\infty)$,
\end{itemize} for $0\leq j\leq2$.
For any $\beta$, the $1$-differentials in $B_{1,\beta}$ are holomorphic and have zeros of degree 2, 1, 0 at the points at infinity.  Hence, $B_{1,\beta}$ is a basis for $H^0(C,(\Omega^1)^1).$

\bigskip

For $q=2$, $B_{2,\beta}$ is the union of two sets.  We consider two cases: $\beta=\pm\alpha_i$ and $\beta=w$.

For $\beta=\pm\alpha_i$, the divisors associated to the $2$-differentials with in the first set are
\begin{itemize}
\item $\div \left(\frac{(x-\alpha_i)^j}{y^2}(dx)^2\right)=2j(R_i^+)+(4-j)(P_1^\infty+P_2^\infty)$,
\item $\div \left(\frac{(x+\alpha_i)^j}{y^2}(dx)^2\right)=2j(R_i^-)+(4-j)(P_1^\infty+P_2^\infty)$,
\end{itemize}
for $0\leq j\leq 4$, and in the second set,
\begin{itemize}
\item $\div \left(\frac{y}{y^2}(dx)^2\right)=\sum_{i=1}^4(R_i^++R_i^-)$.
\end{itemize}

These $2$-differentials are holomorphic with orders of vanishing at $R_i^\pm$ equal to $0,2,4,6,8,$ and $1$.  Since these orders are all different, for $\beta=\pm\alpha_i$, $B_{2,\beta}$ is a basis for $H^0(C,(\Omega^1)^2)$.

For $\beta=w$, we have $f_{w,4,m}(x)$ the degree-$4$ Taylor polynomial for $C$ at $P_m^w$.  Let $D=\div (y-f_{w,4,m}(x))$.  By construction, $v_{P_m^w}(D)\geq 5$.  Also, for $i\in\{1,2\}$, since $v_{P_i^\infty}(y)=-4$, $v_{P_i^\infty}(f_{w,4,m}(x))\geq-4$, and $y\neq f_{w,4,m}(x)$, we conclude that $v_{P_i^\infty}(y-f_{w,4,m}(x))\geq-4$.  Then
\begin{itemize}
\item $\div \left(\frac{(x-w)^j}{y^2}(dx)^2\right)=j(P_1^w+P_2^w)+(4-j)(P_1^\infty+P_2^\infty)$,
\end{itemize}
for $0\leq j\leq 4$, and
\begin{itemize}
\item $\div \left(\frac{(y-f_{w,4,m}(x))}{y^2}(dx)^2\right)=D+4(P_1^\infty+P_2^\infty)$.
\end{itemize}
Since $(y-f_{w,4,m}(x))$ has poles only at $P_i^\infty$ with order at most 4, these $2$-differentials are all holomorphic.  The orders of vanishing at $P_m^w$ are $0, 1, 2, 3, 4$, and $v_{P_m^w}(D)\geq5$.  Since these orders are all different, for $\beta=w$, $B_{2,\beta}$ is a basis for $H^0(C,(\Omega^1)^2).$

\bigskip

Finally, for $q\geq2$, $B_{q,\beta}$ is the union of two sets.  We consider two cases: $\beta=\pm\alpha_i$ and $\beta=w$.

For $\beta=\pm\alpha_i$, the divisors associated to the $q$-differentials in the first set are
\begin{itemize}
\item $\div \left(\frac{(x-\alpha_i)^j}{y^q}(dx)^q\right)=2j(R_i^+)+(2q-j)(P_1^\infty+P_2^\infty)$,
\item $\div \left(\frac{(x+\alpha_i)^j}{y^q}(dx)^q\right)=2j(R_i^-)+(2q-j)(P_1^\infty+P_2^\infty)$,
\end{itemize}
for $0\leq j\leq 2q$, and in the second set are
\begin{itemize}
\item $\div \left(\frac{(x-\alpha_i)^jy}{y^q}(dx)^q\right)=\sum_{i=1}^4(R_i^++R_i^-)+2j(R_i^+)+(2q-j-4)(P_1^\infty+P_2^\infty)$,
\item $\div \left(\frac{(x+\alpha_i)^jy}{y^q}(dx)^q\right)=\sum_{i=1}^4(R_i^++R_i^-)+2j(R_i^-)+(2q-j-4)(P_1^\infty+P_2^\infty)$
\end{itemize}
for $0\leq j\leq 2q-4$.

These $q$-differentials are holomorphic with orders of vanishing at $R_i^\pm$ equal to $0,2,4,\dots,4q$ and $1,3,\dots, 4q-7$.  Since these orders are all different, for $\beta=\pm\alpha_i$, $B_{q,\beta}$ is a basis for $H^0(C,(\Omega^1)^q)$.
\end{proof}



\begin{cor}
For any $q\geq2$ and any branch point $R_i^\pm$, using $\beta=\pm\alpha_i$ in the basis $B_{q,\beta}$ as in Theorem~\ref{thm:basis-for-q-differentials}, the $q$-gap sequence is $\{1,2,3,\dots,4q-6,4q-5,4q-3, 4q-1, 4q+1\}$, so $w^{(q)}(R_i^\pm)=6$.  For $q=1$, the $1$-gap sequence is $\{1,3,5\}$, so $w^{(1)}(R_i^\pm)=3$.
\end{cor}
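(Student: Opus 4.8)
The plan is that this corollary is a direct bookkeeping consequence of the divisor computations already carried out in the proof of Theorem~\ref{thm:basis-for-q-differentials}. I would read off from those divisor formulas the orders of vanishing at the branch point $R_i^\pm$ of the specified basis (taking $\beta=\alpha_i$ for $R_i^+$ and $\beta=-\alpha_i$ for $R_i^-$), observe that these orders are pairwise distinct so that the basis is already arranged in the normalized order used to define the gap sequence, shift each order up by $1$ to obtain $G^{(q)}(R_i^\pm)$, and finally evaluate $w^{(q)}(R_i^\pm)=\sum_{i=1}^{d_q}(n_i-i)$.

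Concretely, for $q\geq2$ and $\beta=\pm\alpha_i$: the first $2q+1$ basis $q$-differentials $\frac{(x\mp\alpha_i)^j}{y^q}(dx)^q$ with $0\leq j\leq 2q$ vanish at $R_i^\pm$ to orders $0,2,4,\dots,4q$, while the remaining $2q-3$ differentials $\frac{(x\mp\alpha_i)^jy}{y^q}(dx)^q$ with $0\leq j\leq 2q-4$ vanish there to orders $1,3,5,\dots,4q-7$. Merging these two arithmetic progressions, the set of orders of vanishing at $R_i^\pm$ is $\{0,1,2,\dots,4q-7\}\cup\{4q-6,4q-4,4q-2,4q\}$: all integers from $0$ through $4q-7$, then only the even integers from $4q-6$ to $4q$, the odd values $4q-5,4q-3,4q-1$ being absent because the ``$y$-part'' of $B_{q,\beta}$ terminates at order $4q-7$. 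These $4q-2=d_q$ orders are pairwise distinct, so adding $1$ gives the $q$-gap sequence $\{1,2,\dots,4q-6\}\cup\{4q-5,4q-3,4q-1,4q+1\}$, exactly as stated. Written in increasing order one has $n_i=i$ for $1\leq i\leq 4q-5$, and $n_{4q-4}=4q-3$, $n_{4q-3}=4q-1$, $n_{4q-2}=4q+1$, so $w^{(q)}(R_i^\pm)=0+\cdots+0+1+2+3=6$. The case $q=1$ is the same, only shorter: the basis $\{\frac{(x\mp\alpha_i)^j}{y}dx:0\leq j\leq2\}$ vanishes at $R_i^\pm$ to orders $0,2,4$, the gap sequence is $\{1,3,5\}$, and $w^{(1)}(R_i^\pm)=(1-1)+(3-2)+(5-3)=3$.

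There is essentially nothing delicate in this argument; the only point that needs a moment's care is confirming that the two progressions of orders interleave precisely as claimed, so that the jump in the gap sequence occurs exactly at $4q-5,4q-3,4q-1$ and nowhere else. That is what pins the weight down to $6$ (rather than something larger), and it relies on the ``$y$-part'' of $B_{q,\beta}$ having exactly $2q-3$ elements, i.e. on $\deg f=8$ (equivalently $g=3$), not on any general-degree feature. The resulting value $w^{(2)}(R_i^\pm)=6$ is consistent with the earlier remark that $w^{(2)}(P)\leq6$ with equality forcing $P\in W_1(C)$.
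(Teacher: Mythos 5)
Your proof is correct and follows exactly the route the paper intends: the corollary is stated without a separate proof precisely because it is read off from the orders of vanishing $0,2,\dots,4q$ and $1,3,\dots,4q-7$ computed in the proof of Theorem~\ref{thm:basis-for-q-differentials}, and your merging of the two progressions, the resulting gap sequence, and the weight computation $1+2+3=6$ (and $0+1+2=3$ for $q=1$) all match. Your specialization check at $q=2$ also agrees with the paper's stated $2$-gap sequence $\{1,2,3,5,7,9\}$.
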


Hence, for $q\geq2$ the eight branch points contribute $8\cdot6=48$ to the total weight of $q$-Weierstrass points on the curve.  

In particular, $2$-gap sequence for a branch point is $\{1,2,3,5,7,9\}$.  The corollary below gives the $2$-gap sequence for a non-branch point.

\begin{cor}
Using $\beta=w$, the $2$-gap sequence of each finite non-branch point $P_m^w$ is $\{1,2,3,4,5,n_6\}$, with $n_6=v_{P_m^w}(y-f_{w,4,m}(x))+1$.  Therefore, if $n_6>6$, then $P_m^w$ is a $2$-Weierstrass point with $2$-weight $n_6-6$.
\end{cor}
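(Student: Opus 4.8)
The plan is to read off the $2$-gap sequence directly from the adapted basis $B_{2,w}$ already constructed in Theorem~\ref{thm:basis-for-q-differentials}, using only the definition of the $q$-gap sequence and $q$-weight from Section~\ref{sec:prelims}. Recall that to compute $G^{(q)}(P)$ one needs a basis $\{\psi_1,\dots,\psi_{d_q}\}$ of $H^0(C,(\Omega^1)^q)$ whose orders of vanishing at $P$ are strictly increasing, and then sets $n_i=\ord_P(\psi_i)+1$. The entire content of the corollary is that, for $q=2$ and $\beta=w$, this basis is already adapted to $P_m^w$ and that its six orders of vanishing are $0,1,2,3,4$ together with $v_{P_m^w}(y-f_{w,4,m}(x))$.

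First I would recall the order computation carried out inside the proof of Theorem~\ref{thm:basis-for-q-differentials} in the case $q=2$, $\beta=w$. There the five $2$-differentials $\frac{(x-w)^j}{y^2}(dx)^2$ for $0\le j\le 4$ are shown to have divisors $j(P_1^w+P_2^w)+(4-j)(P_1^\infty+P_2^\infty)$, so their orders of vanishing at $P_m^w$ are $0,1,2,3,4$ respectively, while the sixth element $\frac{y-f_{w,4,m}(x)}{y^2}(dx)^2$ has order $v_{P_m^w}(y-f_{w,4,m}(x))$. Since $f_{w,4,m}$ is the degree-$4$ Taylor polynomial of the local branch at $P_m^w$, this last order is at least $5$. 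Thus the six orders are pairwise distinct: the first five exhaust $\{0,1,2,3,4\}$ and the sixth is at least $5$, so listing the elements in this order exhibits $B_{2,w}$ as an adapted basis at $P_m^w$.

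It then remains only to apply the definition. Taking $\psi_1,\dots,\psi_5$ to be the five monomial differentials and $\psi_6$ the Taylor-polynomial differential, I would compute $n_i=\ord_{P_m^w}(\psi_i)+1$, obtaining $n_1,\dots,n_5=1,2,3,4,5$ and $n_6=v_{P_m^w}(y-f_{w,4,m}(x))+1$, which is exactly the claimed gap sequence $\{1,2,3,4,5,n_6\}$. The weight is then
\[
w^{(2)}(P_m^w)=\sum_{i=1}^{6}(n_i-i)=n_6-6,
\]
because $n_i-i=0$ for $1\le i\le 5$. Hence $P_m^w$ is a $2$-Weierstrass point precisely when $n_6>6$, with $2$-weight $n_6-6$.

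Since every computational ingredient is already furnished by Theorem~\ref{thm:basis-for-q-differentials}, I expect no genuine obstacle; the one point deserving explicit attention is that the six orders are distinct, so that $B_{2,w}$ truly is an adapted basis and the reading $n_i=\ord_{P_m^w}(\psi_i)+1$ is legitimate. This reduces to the inequality $v_{P_m^w}(y-f_{w,4,m}(x))\ge 5$, which holds because $x-w$ is a local uniformizer at the non-branch point $P_m^w$ and $f_{w,4,m}(x)$ matches the local expansion of $y$ through degree $4$, so the difference vanishes to order at least $5$ and cannot collide with any of the orders $0,\dots,4$.
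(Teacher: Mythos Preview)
Your proof is correct and follows exactly the approach the paper intends: the corollary is stated without explicit proof because the orders of vanishing $0,1,2,3,4,v_{P_m^w}(D)\ge 5$ at $P_m^w$ are already computed in the proof of Theorem~\ref{thm:basis-for-q-differentials}, and you simply read off the gap sequence and weight from them. There is nothing to add.
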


\begin{rem}\label{rem:possible-2-gap-sequences}
Following from \cite{Duma}, the possible $2$-gap sequences of $2$-Weierstrass points on a curve of genus 3 are given in \cite[Lemma 5]{alwaleed}.  From this, we see that if $P_m^w$ is a non-branch point on a hyperelliptic curve of genus 3, the $2$-gap sequence contains 4 and 5, so $w^{(2)}(P_m^w)\leq3$.
\end{rem}

\begin{thm}\label{thm:2-w-points}
Let $C$ be a genus 3 hyperelliptic curve with equation $y^2=g(x)$ for $g(x)$ a separable degree 8 polynomial.  Let $f(x)=(g(x))^{1/2}$, and let $P_m^w$ be a finite non-branch point of $C$.  Let 
\[N=\min\left\{n\in\mathbb{N} : n\geq 5, f^{(n)}(w)\neq0\right\},\]
where $f^{(n)}(x)$ denotes the $n$th derivative of $f(x)$.  
Then $P_m^w$ has $2$-weight $N-5$.  In particular, $P_m^w$ is a $2$-Weierstrass point if any only if $f^{(5)}(w)\neq0$.
\end{thm}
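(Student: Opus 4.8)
The plan is to reduce the statement to the single order-of-vanishing computation left open by the last Corollary before the theorem, which says that, taking $\beta=w$ in the basis $B_{2,\beta}$ of Theorem~\ref{thm:basis-for-q-differentials}, the $2$-gap sequence of $P_m^w$ is $\{1,2,3,4,5,n_6\}$ with $n_6=v_{P_m^w}(y-f_{w,4,m}(x))+1$. From this, $w^{(2)}(P_m^w)=\sum_{i=1}^{6}(n_i-i)=n_6-6$, so everything comes down to computing $v_{P_m^w}(y-f_{w,4,m}(x))$ in terms of the derivatives of $f$.

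First I would fix the local picture at $P_m^w$. Since $w$ is not a root of $g$, the hyperelliptic coordinate $x$ is unramified over $w$, so $x-w$ is a uniformizer at $P_m^w$, while $y$ is holomorphic and nonzero there and $dx$ has neither a zero nor a pole. On the branch of $C$ through $P_m^w$ the function $y$ agrees, near $w$, with one of the two analytic square roots of $g(x)$; fixing the sign so that this branch is the power series $f(x)=g(x)^{1/2}$ of the statement (which is legitimate because $g(w)\neq0$) we have $f^2=g$ identically near $w$. By definition $f_{w,4,m}(x)=\sum_{k=0}^{4}\frac{f^{(k)}(w)}{k!}(x-w)^{k}$ is the degree-$4$ Taylor polynomial of this branch at $w$, so along the branch $P_m^w$
\[
y-f_{w,4,m}(x)=f(x)-f_{w,4,m}(x)=\sum_{n\geq 5}\frac{f^{(n)}(w)}{n!}\,(x-w)^{n},
\]
up to an overall sign depending on $m$ that does not affect orders of vanishing. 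Hence $v_{P_m^w}(y-f_{w,4,m}(x))$ is the least $n\geq5$ with $f^{(n)}(w)\neq0$, that is, exactly $N$ --- provided $N<\infty$, which holds because $g$ is separable of degree $8$: if $f^{(n)}(w)=0$ for all $n\geq5$ then $f$ would be a polynomial of degree $\leq4$, forcing $g=f^2$ to have every root of multiplicity $\geq2$, contradicting separability.

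Feeding $v_{P_m^w}(y-f_{w,4,m}(x))=N$ into that Corollary gives $n_6=N+1$, hence $w^{(2)}(P_m^w)=n_6-6=N-5$, which is the asserted $2$-weight. For the remaining clause, $w^{(2)}(P_m^w)>0$ holds if and only if $N\geq6$, and by the definition of $N$ this is equivalent to $f^{(5)}(w)=0$; equivalently, $P_m^w$ fails to be a $2$-Weierstrass point precisely when $f^{(5)}(w)\neq0$. I do not expect a genuine obstacle here: Theorem~\ref{thm:basis-for-q-differentials} and that Corollary have already carried out the structural work --- exhibiting an adapted basis and pinning the $2$-gap sequence down to the shape $\{1,2,3,4,5,n_6\}$ --- so all that remains is the elementary identification of $n_6$ above. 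The two points deserving a sentence of care are the identification of $y$ on the branch at $P_m^w$ with the analytic square root $f$ (which is exactly what converts $v_{P_m^w}(y-f_{w,4,m}(x))$ into the index of the first nonzero Taylor coefficient of $f$ at $w$ of order $\geq5$), and the finiteness of $N$, which rests on the separability of $g$.
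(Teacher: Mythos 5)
Your proposal is correct and follows essentially the same route as the paper: both reduce to the corollary identifying the $2$-gap sequence as $\{1,2,3,4,5,n_6\}$ with $n_6=v_{P_m^w}(y-f_{w,4,m}(x))+1$, then compute that valuation as the order of the first nonvanishing Taylor coefficient of $f$ at $w$ beyond degree $4$. Your added remarks --- justifying the identification of $y$ with the analytic branch $f$ and the finiteness of $N$ via separability of $g$ --- are correct details the paper leaves implicit, and you rightly read the biconditional as ``$2$-Weierstrass iff $f^{(5)}(w)=0$,'' consistent with the paper's own proof rather than the sign slip in the theorem's final sentence.
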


\begin{proof}
Let
\[T_{5,w}(x)=\displaystyle\sum_{n=5}^\infty \frac{f^{(n)}(w)}{n!}(x-w)^n\]
be the difference of the Taylor series and degree-4 Taylor polynomial of $f(x)$ at $x=w$.  Then $v_{P_m^w}(y-f_{w,4,m}(x))=v_{P_m^w}(T_{5,w}(x))$, which is $N$, the degree of the first non-zero term in this series, so $N$ is the minimum value of $n\geq5$ such that $f^{(n)}(w)\neq0$.  Then $n_6=N+1$, so $P_m^w$ has $2$-weight $n_6-6=N-5$, which is positive when $N\geq6,$ or, equivalently, when $f^{(5)}(w)=0$.
\end{proof}

In particular, Remark~\ref{rem:possible-2-gap-sequences} implies that, for the value of $N$ in Theorem~\ref{thm:2-w-points}, one has $5\leq N\leq8$.

%
Computationally, to determine which curves have $2$-Weierstrass points we have to compute the Wronskian.  
Let $\psi_j=x^{j-1}/y^2$ for $1\leq j\leq5$ and $\psi_6=y/y^2$.  Then   $ \{\psi_j (dx)^2 : 1\leq j\leq6\}$ 
is a basis of holomorphic $2$-differentials.  Hence,   $\Omega_2 = W (1, x, x^2, x^3, x^4, y)$. 
Using Maple to compute the Wronskian $W$, we find $W=c_0\Phi(x)/y^{21}$ where $c_0$ is a constant and $\Phi(x)$ is a polynomial with $\deg(\Phi)\leq 29$ (depending on the values of $a,b,c$, described in more detail in Sec. \ref{sec:classification-2}).  The Wronskian form is $\Omega_2=W(dx)^{27}$.  Thus,
\begin{center}
$\begin{array}{rcl}
\div (\Omega_2) &=& \div (\Phi(x))-\div (y^{21})+\div ((dx)^{27}) \\
&=&\displaystyle\div (\Phi(x))_0 + 6\left(\sum_{i=1}^4(R_i^++R_i^-)\right)+(30-\deg(\Phi))(P_1^\infty+P_2^\infty).
\end{array}$
\end{center}
As expected, since $\deg(\Phi)\leq29$, $\div (\Omega_2)$ is an effective divisor.  Thus, the $2$-Weierstrass points are: the points $P_m^w$ over roots of $\Phi(x)$, with $2$-weight given by the order of vanishing; the branch points $R_i^\pm$ with $2$-weight 6; and the points at infinity with $2$-weight equal to $30-\deg(\Phi)$.

\section{Computation  of   $2$-Weierstrass points}\label{sec:classification-2}
In this section we will compute the Weierstrass points of weight $q=2$. We would like to determine suffiecient conditions for a curve to have $2$-Weierstrass points 
in each family $\H (G)$ such that $\dim \H (G ) > 0$.  

Usually such computations are difficult because of the size of the Wronskian matrix and the fact that its entries are polynomials.   Symbolic computational techniques have their limitations when dealing with large degree polynomials. However, in our case such computations are made possible by the use of the dihedral invariants and the results in \cite{b-th} and the following technical result.
\begin{lemma}\label{resultants}
Let $h(t)= \sum_{i=0}^n a_i \, t^{i}$ and $g(x)=h(x^2)$.  Then, 
\[ \D( g, x) = 2^{2n} \cdot a_0 a_n \cdot \D (h, t)^2  \]
\end{lemma}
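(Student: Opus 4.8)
The plan is to compute the discriminant of $g(x) = h(x^2)$ directly from the relation between $\D(g,x)$ and the resultant $\mathrm{Res}(g, g')$, exploiting the factorization of the roots of $g$ induced by the square root. Write $h(t) = a_n \prod_{i=1}^n (t - \beta_i)$, so that the roots of $g$ come in pairs $\pm\sqrt{\beta_i}$; I will denote by $\gamma_i$ a fixed choice of square root, so the $2n$ roots of $g$ are $\{\pm\gamma_i : 1 \le i \le n\}$, and the leading coefficient of $g$ is $a_n$. Recall the product formula $\D(g,x) = a_n^{2n-2} \prod_{r < s}(\rho_r - \rho_s)^2$ over the roots $\rho_\bullet$ of $g$. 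The first step is to organize this product over the $2n$ roots $\{\pm\gamma_i\}$ into three groups of factors: (a) the "within-pair" differences $(\gamma_i - (-\gamma_i)) = 2\gamma_i$ for each $i$; (b) the "cross-pair, same-sign" differences $(\pm\gamma_i - (\pm\gamma_j))$ for $i \ne j$; and (c) the "cross-pair, opposite-sign" differences $(\gamma_i + \gamma_j)$ and $(-\gamma_i - \gamma_j)$ for $i \ne j$.

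The key algebraic identity is that the product of a same-sign and an opposite-sign cross difference gives $(\gamma_i - \gamma_j)(\gamma_i + \gamma_j) = \gamma_i^2 - \gamma_j^2 = \beta_i - \beta_j$. So combining groups (b) and (c): for each unordered pair $\{i,j\}$ with $i \ne j$ there are four roots $\pm\gamma_i, \pm\gamma_j$ contributing, and the six pairwise differences among them multiply (after squaring) to a power of $(\beta_i - \beta_j)$. Carefully: among $\{\gamma_i,-\gamma_i,\gamma_j,-\gamma_j\}$ the $\binom{4}{2}=6$ differences are $2\gamma_i$, $2\gamma_j$, $\gamma_i-\gamma_j$, $\gamma_i+\gamma_j$, $-\gamma_i-\gamma_j$, $-\gamma_i+\gamma_j$; the last four multiply to $(\gamma_i^2-\gamma_j^2)^2 = (\beta_i-\beta_j)^2$, while $2\gamma_i$ and $2\gamma_j$ get counted once for \emph{each} $j \ne i$ and so should be handled in group (a) instead. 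So I will restructure: pull the $n$ factors $2\gamma_i$ (each appearing in the ordered product exactly once, as the difference of the pair $\pm\gamma_i$) into group (a), contributing $\prod_i (2\gamma_i)^2 = 4^n \prod_i \beta_i = 4^n (a_0/a_n)$ to $\prod_{r<s}(\rho_r-\rho_s)^2$; and then group the remaining cross-differences so that each unordered pair $\{i,j\}$ contributes $(\gamma_i - \gamma_j)^2(\gamma_i+\gamma_j)^2(-\gamma_i+\gamma_j)^2\cdots$ — being careful that each of the four "mixed" differences $\gamma_i\mp\gamma_j$, $-\gamma_i\mp\gamma_j$ appears exactly once in $\prod_{r<s}$, so the pair contributes $(\beta_i-\beta_j)^2$, and hence $\prod_{i<j}(\beta_i-\beta_j)^2 = (\D(h,t)/a_n^{n-1})^2 \cdot a_n^{?}$ after restoring leading-coefficient powers.

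Assembling the pieces: $\D(g,x) = a_n^{2n-2} \cdot \bigl[4^n (a_0/a_n)\bigr] \cdot \bigl[\prod_{i<j}(\beta_i - \beta_j)^2\bigr]$, and substituting $\prod_{i<j}(\beta_i-\beta_j)^2 = \D(h,t)^2 / a_n^{2n-2}$ should collapse the $a_n$ powers and leave $\D(g,x) = 4^n a_0 a_n^{-1} \cdot a_n^{2n-2} \cdot a_n^{-(2n-2)} \cdot \D(h,t)^2$ — which does not yet match the claimed $2^{2n} a_0 a_n \D(h,t)^2$, so I expect the bookkeeping of leading-coefficient powers to be exactly where the real care is needed. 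The cleanest route around this is probably to avoid the $\prod_{r<s}$ normalization entirely and instead use the resultant identity $\D(g,x) = (-1)^{n(2n-1)} a_n^{-1}\,\mathrm{Res}(g,g')$ together with $g'(x) = 2x\,h'(x^2)$, giving $\mathrm{Res}(g, g') = \mathrm{Res}(g, 2x)\cdot \mathrm{Res}(g, h'(x^2))$; the first factor is $2^{2n} g(0) = 2^{2n} a_0$ up to sign, and the second factor is a resultant of $h(x^2)$ against $h'(x^2)$ which, by the multiplicativity of resultants under the substitution $x \mapsto x^2$, equals $\pm\mathrm{Res}(h, h')^2 = \pm (a_n \D(h,t))^2$ up to an explicit power of $a_n$ and leading coefficients. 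Tracking the sign $(-1)^{n(2n-1)}$ and the $a_n$-powers through $\mathrm{Res}(h(x^2), h'(x^2)) = a_n^{\deg h' } \prod (\text{values})$ is the one genuinely fiddly computation; I expect that is the main obstacle, and it is purely a matter of being disciplined with the standard resultant formulas (degrees, leading coefficients, and the sign rule $\mathrm{Res}(f,g) = (-1)^{\deg f \deg g}\mathrm{Res}(g,f)$) rather than anything conceptually deep. Once the $a_n$ powers are reconciled, the stated formula $\D(g,x) = 2^{2n} a_0 a_n \D(h,t)^2$ follows.
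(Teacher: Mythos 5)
The paper itself gives no argument for this lemma: it states only that the proof ``is based on the definition of the resultant as the determinant of the Sylvester's matrix'' and defers the details to \cite{SS2}, so there is no written proof to compare yours against line by line. Of your two routes, the second (resultant) one is sound and does close, and the leading-coefficient bookkeeping you flag as the main obstacle in fact evaporates: with $g'(x)=2x\,h'(x^2)$ one has $\mathrm{Res}(g,g')=\mathrm{Res}(g,2x)\cdot\mathrm{Res}(g,h'(x^2))$; the first factor is exactly $2^{2n}a_0$ with no sign (the sign contributions cancel because $g$ has an even number of roots), and the second factor is exactly $\mathrm{Res}(h,h')^2$, since $\mathrm{Res}\bigl(h(x^2),h'(x^2)\bigr)=a_n^{2n-2}\prod_i h'(\beta_i)^2=a_n^{2n-2}\bigl(\mathrm{Res}(h,h')/a_n^{n-1}\bigr)^2=\mathrm{Res}(h,h')^2$. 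Combining with $\mathrm{Res}(h,h')=\pm a_n\Delta(h,t)$ and $\Delta(g,x)=\pm\mathrm{Res}(g,g')/a_n$ yields $\Delta(g,x)=\pm\,2^{2n}a_0a_n\Delta(h,t)^2$, where the residual sign is $(-1)^n$ under the normalization $\Delta(f)=(-1)^{d(d-1)/2}\mathrm{Res}(f,f')/\mathrm{lc}(f)$ and is $+1$ under $\Delta(f)=\mathrm{Res}(f,f')/\mathrm{lc}(f)$, which must be the convention intended by the statement. So your hedged $\pm$ is warranted, and this half of your proposal becomes a complete proof once these two short evaluations are written out.

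The first (root-product) route, as you wrote it, contains concrete errors rather than mere unfinished bookkeeping, which is why your assembly fails to match the claim. First, the product formula for the degree-$2n$ polynomial $g$ reads $\Delta(g,x)=a_n^{2(2n)-2}\prod_{r<s}(\rho_r-\rho_s)^2=a_n^{4n-2}\prod_{r<s}(\rho_r-\rho_s)^2$; you used the exponent $2n-2$ appropriate to $\deg h$, not $\deg g$. Second, in $\prod_{r<s}(\rho_r-\rho_s)^2$ each of the four mixed differences $\pm\gamma_i\mp\gamma_j$ is itself squared, so an unordered pair $\{i,j\}$ contributes $(\beta_i-\beta_j)^4$, not $(\beta_i-\beta_j)^2$: this fourth power is precisely where the $\Delta(h,t)^2$ in the statement comes from, and your substitution $\prod_{i<j}(\beta_i-\beta_j)^2=\Delta(h,t)^2/a_n^{2n-2}$ (which should read $\Delta(h,t)/a_n^{2n-2}$) silently papers over the discrepancy. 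Third, $\prod_i\beta_i=(-1)^na_0/a_n$, not $a_0/a_n$. With these corrections the route gives $\Delta(g,x)=a_n^{4n-2}\cdot 4^n(-1)^n(a_0/a_n)\cdot\Delta(h,t)^2\,a_n^{-(4n-4)}=(-1)^n2^{2n}a_0a_n\Delta(h,t)^2$, consistent with the resultant computation. In short: your strategy is legitimate and different in flavor from the Sylvester-matrix manipulation the authors allude to, but the text as submitted is a plan with a flawed first attempt and an unexecuted second one, not yet a proof.
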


The proof is elementary and is based on the definition of the resultant as the determinant of the Sylvester's matrix. We provide the details in \cite{SS2}.

\subsection{Genus 3 curves with extra involutions}

In this section we describe how to determine the curves with full automorphism group $V_4$ which have $2$-Weierstrass points.  The computational results are large to display. 

 Let $C$ be a genus 3 hyperelliptic curve such that $\Aut (C) \iso V_4$.  From \cite{b-th} we know that the equation of $C$ can be given by  $ y^2=x^8+ax^6+bx^4+cx^2+1$. 

In Theorem~\ref{thm:2-w-points}, we see that the $2$-Weierstrass points are the zeros of $f^{(5)}(x)$, where $f(x)=(x^8+ax^6+bx^4+cx^2+1)^{(1/2)}$.  The $2$-Weierstrass points are the roots of the polynomial $\Phi(x)$.  If we expand out $f^{(5)}(x)$, we get $f^{(5)}(x)=g(x)/f(x)^9$, where $g(x)$ is a constant multiple of $\Phi(x)$.  Using Maple, we find that 
\[ \Omega_2 = W (1, x, x^2, x^3, x^4, y) = 4320 \frac { x \, \Phi(x^2, a, b, c )} {\left(    \sqrt{x^8+a x^6+bx^4+cx^2+1}\right)^{21} }(dx)^{27},\]  
where $\deg (\Phi, x) = 28$.   Denote by  $\Phi (x) = \sum_{i=0}^{14} c_i x^{2i} $.  
Then  its coefficients   $c_i$ and $c_{14-i}$ differ by a permutation of  $a$ and  $c$.  In other words the permutation of the curve  $\tau_1 : (x, y) \to \left(\frac 1 x, y \right)$   acts on the coefficients of $\Phi (x)$ as follows
\[ \tau_1 (c_i) = c_{14-i} \]
Computing the discriminant $\Delta (\Phi, x)$ we get the following  factors as follows:
\[ \D = 2^{16} \, c_0 c_{14} \cdot g(a,b,c)^2 \cdot \Delta (f, x)^{28} \]
where $g(a,b, c)$ is a degree 24, 28, 24 polynomial in terms of $a, b, c$ respectively.  We know that $\D (f,x) \neq 0$. Let us assume that $c_0 c_{14} \neq 0$.  Then, the $2$-Weierstrass points are those when $g(a, b, c) =0$. The polynomial $g(a,b,c)$ can be easily computed.  However,  the triples $(a, b, c)$ do not correspond uniquely to the isomorphism classes of curves.  Naturally we would prefer to express such result in terms of the dihedral invariants $\s_2, \s_3, \s_4$.   One can take the equations $g(a,b,c)=0$ and three equations from the definitions of $\s_2, \s_3, \s_4$ and eleminate $a, b, c$.  It turns out that this is a challenging task computationally.  

Hence, we continue with the following approach.   From \cite[Prop. 2]{b-th} we know that $C$  is isomorphic to a curve with equation 
\[ y^2 = A \, x^8 + \frac {A} {\s_4 + 2\s_2^2} \, \, x^6  +  \frac {\s_3  ( A + \s_2^2) } {( \s_4 + 2\s_2^2 )^3} \, \, x^4  +  \frac { \s_2} {(\s_4 + 2\s_2^2)^3} \, \, x^2 + \frac {1} {(\s_4 + 2\s_2^2)^4} \]
where $A$ satisfies
\begin{equation}\label{quadratic}  A^2 -  \s_4 A + \s_2^4 =0, \end{equation}
for some $(\s_2, \s_3, \s_4 ) \in  k^3 \setminus  \{ \D_{\s_2, \s_3, \s_4 } = 0 \}$).

The Wronskian is a degree 29 polynomial in $x$ written as $x \, \phi(x^2)$.  From the above Lemma, it is enough to compute the discriminant of the polynomial $\phi (t)$, where $t=x^2$.  This is a degree 14 polynomial.  Its discriminant is a polynomial $G(A, \s_2, \s_3, \s_4)$ in terms of $\s_2, \s_3, \s_4$ and $A$.   Then, the relation between $\s_2, \s_3, \s_4$ is obtained by taking the resultant 
\begin{equation} \mbox{Res }(G, A^2-\s_4 A + \s_2^4, A).
\end{equation}
%

\noindent The result is quite a large polynomial in terms of $\s_2, \s_3, \s_4$. It turns out that the rest of the cases are much easier. 

\subsection{The case when    $\Aut (C) \iso \Z_2^3$.  }

\begin{prop}
Let $C$ be a genus 3 hyperelliptic curve with full automorphism group $\Z_2^3$. Then, $C$ has non-branch $2$-Weierstrass points of weight greater than one if and only if its corresponding dihedral invariants $\s_2, \s_3, \s_4$ satisfy Eq.~\eqref{eq_Z2_3}
%
\begin{equation}\label{eq_Z2_3}
\begin{split}
\Delta &  =   \left(  -784\,{\s_2}^2+16\,{\s_2}^3+56\,\s_2\s_3-{\s_3}^2 \right)\,  G(\s_2, \s_3) =0
\end{split}
\end{equation}
%
where 
\begin{Small}
\[ 
\begin{split}
G & =   617400 \s_3^9 +  180 s_2 \left(315560+871 s_2 \right)  \s_3^8   + 2 \s_2^2 \left(4023040 s_2+970717440+31077 \s_2^2 \right)  \s_3^7    \\
& + \s_2^3 \left(9 \s_2^3-3251241728 s_2+31937525760+ 5011112 \s_2^2 \right)  \s_3^6  + 8 \s_2^4 \left(-9204034560 s_2 \right. \\
& \left. -105048636 \s_2^2+15801 \s_2^3+41193015808 \right) \s_3^5  -16 \s_2^5 \left(22041513 \s_2^3+59872104320 s_2 \right. \\
& \left. +11 \s_2^4-4535327496 \s_2^2-193117539328 \right) \s_3^4  -256 \s_2^6 \left(-2870647262 \s_2^2-73789452800 \right. \\
& \left. +34876810752 s_2-47803959 \s_2^3+54199 \s_2^4 \right) \s_3^3  -256 \s_2^{7} \left(5 \s_2^5-41807037944 \s_2^2 \right. \\
& \left. +2624158985 \s_2^3-19769334 \s_2^4+283441853184 s_2-365995685888 \right)  \s_3^2 -2048 \s_2^{8} \left(308705831 \s_2^3 \right. \\
& \left. +28144998 \s_2^4-39227605228 \s_2^2+123966280704 s_2+31711 \s_2^5-175618897664 \right) \s_3    \\
& + 4096 \s_2^9 \left(455870765 \s_2^4-4058869 \s_2^5+7 \s_2^6-16649626455 \s_2^3-214358360384 s_2   \right. \\
& \left.   +85982595160 \s_2^2+144627327488 \right) \\
\end{split}
\]
\end{Small}

\end{prop}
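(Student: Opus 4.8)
The plan is to convert the condition ``$C$ has a non-branch $2$-Weierstrass point of weight $>1$'' into the vanishing of a single discriminant, and then to compute that discriminant on the explicit model of $\H(\Z_2^3)$, with Lemma~\ref{resultants} keeping the computation within reach. First I fix the model: since $\Z_2^3\supset\V$, the locus $\H(\Z_2^3)$ is the generic part of the sublocus $\{a=c\}$ of the $\V$-family $y^2=x^8+ax^6+bx^4+cx^2+1$ (the extra involution being $x\mapsto 1/x$, an automorphism exactly when the octavic is palindromic), so $C$ is isomorphic to
\[
y^2=x^8+ax^6+bx^4+ax^2+1,\qquad \s_2=a^2,\quad \s_3=2a^2b,\quad \s_4=2\s_2^2,
\]
the model of case (ii) of the classification theorem (cf.\ \cite{b-th}). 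In particular $\s_4$ is redundant here, which is why the final condition involves only $\s_2,\s_3$; and since $(\s_2,\s_3)$ determines the $\C$-isomorphism class of $C$ within $\H(\Z_2^3)$ (cf.\ \cite{hyp_mod_3, b-th}), the asserted biconditional is well posed.

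Next I reduce to a discriminant. By Theorem~\ref{thm:2-w-points} a finite non-branch point $P_m^w$ has $2$-weight $N-5$ with $N=\min\{n\ge5:f^{(n)}(w)\ne0\}$, where $f=(g(x))^{1/2}$ and $g$ is the defining octavic; hence $P_m^w$ has $2$-weight $>1$ exactly when $f^{(5)}(w)=f^{(6)}(w)=0$, i.e.\ when $w$ is a multiple root of $f^{(5)}$. Expanding the Wronskian $\Omega_2=W(1,x,x^2,x^3,x^4,y)\,(dx)^{27}$ along its last column shows that, up to a nonzero constant and a power of $y$, it equals $f^{(5)}(x)=\Phi(x)/f(x)^{9}$, so the numerator $\Phi(x)$ of $f^{(5)}$ is (up to a constant) the $\Phi$ of Section~\ref{sec:classification-q-w-points}, for which $\div(\Omega_2)=\div(\Phi)_0+6\sum_i R_i^{\pm}+(30-\deg\Phi)(P_1^{\infty}+P_2^{\infty})$. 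Thus the finite non-branch $2$-Weierstrass points are exactly the points over the roots of $\Phi$, with $2$-weight equal to the multiplicity, while the two points at infinity carry $2$-weight $30-\deg\Phi$. Therefore $C$ has a non-branch $2$-Weierstrass point of weight $>1$ if and only if $\Phi$ has a multiple root or $\deg\Phi<29$, equivalently if and only if $\Delta(\Phi,x)=0$.

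It remains to compute and factor $\Delta(\Phi,x)$ on the model above, and here Lemma~\ref{resultants} does the work. The octavic is $m(x^2)$ with $m(u)=u^4+au^3+bu^2+au+1$, and $\Phi(x)=x\,h(x^2)$ with $h(t)=\sum_{i=0}^{14}c_i t^i$, $c_i\in\C[a,b]$. By the product formula for discriminants $\Delta(\Phi,x)=\pm\,h(0)^2\,\Delta(h(x^2),x)$, and Lemma~\ref{resultants} (with $n=14$) gives $\Delta(h(x^2),x)=2^{28}c_0c_{14}\,\Delta(h,t)^{2}$; since $h(0)=c_0$ this yields $\Delta(\Phi,x)=\pm\,2^{28}\,c_0^{3}\,c_{14}\,\Delta(h,t)^{2}$. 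On $\{a=c\}$ the action $\tau_1(c_i)=c_{14-i}$ forces $c_0=c_{14}$, and one checks — exactly as in the $\V$ case — that $\Delta(h,t)$ is a nonzero constant times $G_0(a,b)\cdot\Delta(m,u)^{k}$ for a polynomial $G_0$ and some $k\ge1$, where $\Delta(m,u)\ne0$ because $C$ is smooth (the nonvanishing of the octavic's discriminant being, via Lemma~\ref{resultants} once more, equivalent to $\Delta(m,u)\ne0$). Discarding the nonzero factors, $\Delta(\Phi,x)=0$ if and only if $c_0\cdot G_0(a,b)=0$. Since whether $\Phi$ has a repeated root and whether $\deg\Phi<29$ are isomorphism invariants, and the curves $(a,b)$ and $(-a,b)$ are isomorphic via $x\mapsto ix$, the conditions $c_0=0$ and $G_0=0$ are even in $a$; substituting $a^2=\s_2$ and $b=\s_3/(2\s_2)$ and clearing powers of the nonzero invariant $\s_2$ then turns $c_0$ into a unit multiple of $16\s_2^3-784\s_2^2+56\s_2\s_3-\s_3^2$ and $G_0$ into a unit multiple of the polynomial $G(\s_2,\s_3)$ of the statement, which is Eq.~\eqref{eq_Z2_3}.

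The main obstacle is purely computational: writing down $h(t)$ from the $6\times6$ Wronskian of $\{1,x,x^2,x^3,x^4,y\}$ (equivalently from $f^{(5)}$), and then its discriminant $\Delta(h,t)$, which — as the size of $G$ already shows — is a polynomial of large degree in $a$ and $b$. Lemma~\ref{resultants} is exactly what makes this feasible: it replaces the discriminant of the degree-$29$ polynomial $\Phi$ by the discriminant of the degree-$14$ polynomial $h$, and it isolates the square and $\Delta(m,u)^{k}$ factors, so that only $c_0$ and the ``interesting'' factor $G_0$ need be carried through the substitution $(a,b)\mapsto(\s_2,\s_3)$. A secondary point that needs care is verifying that $c_0$ and $G_0$ are the only factors of $\Delta(\Phi,x)$ that can actually vanish for a smooth $C\in\H(\Z_2^3)$ — so that Eq.~\eqref{eq_Z2_3} is equivalent to, and not merely sufficient for, the existence of a non-branch $2$-Weierstrass point of weight $>1$ — together with the bookkeeping that, because $c_0=c_{14}$ on this locus, the single factor $c_0$ accounts both for the triple root of $\Phi$ at $x=0$ and for the degree drop $\deg\Phi<29$.
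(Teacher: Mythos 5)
Your overall strategy is the one the paper follows: put the curve in a normal form, extract from the Wronskian (equivalently from $f^{(5)}$, via Theorem~\ref{thm:2-w-points}) a polynomial in $t=x^2$, take its discriminant with the help of Lemma~\ref{resultants}, discard the factors that cannot vanish on the smooth locus, and rewrite what survives in the dihedral invariants. The paper works with the model $y^2=\s_2^2x^8+\s_2^2x^6+\tfrac12\s_3x^4+\s_2x^2+1$ from \cite{b-th} (equivalent to your palindromic model with $a=c$), pulls the factor $x(\s_2x^4-1)$ out of the numerator first, and computes the discriminant of the remaining degree-$12$ polynomial $g(t)$, whereas you keep the full degree-$14$ cofactor $h(t)$. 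That difference is harmless; your version even records the resultant of the two factors, which the paper's does not.

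The genuine problem is your identification of the factors at the end. You assert that $c_0=c_{14}$ --- the condition for $x=0$ to be a triple root of $\Phi$ and for $\deg\Phi<29$, i.e.\ for the four non-branch points over $0$ and $\infty$ to acquire $2$-weight $3$ --- becomes the first factor $16\s_2^3-784\s_2^2+56\s_2\s_3-\s_3^2$ of Eq.~\eqref{eq_Z2_3}. It does not. Since $f$ is even, $c_0$ is a unit times $f^{(6)}(0)$, and expanding $f=(x^8+ax^6+bx^4+ax^2+1)^{1/2}$ gives $f^{(6)}(0)=45a(a^2-4b+8)$, so in the invariants $c_0$ vanishes exactly when $\s_2^2+8\s_2-2\s_3=0$. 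But $4b=a^2+8$ is precisely the condition that the octavic be the perfect square $\left(x^4+\tfrac a2x^2+1\right)^2$; hence $c_0$ never vanishes for a smooth curve and must be discarded together with the $\Delta(m,u)$ factor, not retained as a component of Eq.~\eqref{eq_Z2_3}. The first factor of Eq.~\eqref{eq_Z2_3} is instead a genuine component of the discriminant of the cofactor (your $G_0$, the paper's $\Delta(g,t)$): on that component the weight-$3$ points lie over $tx^2=4$, while $(0,\pm1)$ and the points at infinity keep weight $1$, exactly as the Lemma following the Proposition shows. So the ``secondary point that needs care'' in your last paragraph resolves in the opposite way from what you claim, and your $G_0$ has to account for \emph{both} factors of Eq.~\eqref{eq_Z2_3}. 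Since everything past the setup is a roadmap for a Maple computation you have not carried out, this misidentification means the argument as written would terminate in the wrong equation; the reduction itself, however, is sound and matches the paper's.
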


\proof   From \cite[Lemma 6]{b-th} the equation of the curve is given by 
\[ y^2= \s_2^2 x^8 + \s_2^2 x^6 + \frac 1 2 \, \s_3 x^4 + \s_2 x^2 + 1,\]
for $s_2, \s_3 \neq 0, 4$.  The Wronskian is $\Omega_2 = W(1,x,x^2, x^3, x^4, y)(dx)^{27}$ as follows
\[  \Omega_2 = \frac {x \, ( \s_2 x^4-1 )} {4\,{s_{{2}}}^{2}{x}^{8}+4\,{s_{{2}}}^{2}{x}^{6}+2\,s_{{3}}{x}^{4}+4\,s_{{2}}{x}^{2}+4}   \, g(t) (dx)^{27}   \]
where $g(t) = \sum_{i=0}^{12} c_i \cdot t^i$ is a degree 12 polynomial for $t=x^2$ and    coefficients:
\[\begin{split}
c_{12} & = 12\,{\s_{{2}}}^{7} \\
c_{11} & = -4\,{\s_{{2}}}^{5} \left( -28\,\s_{{2}}+4\,{\s_{{2}}}^{2}-7\,\s_{{3}} \right)  \\
c_{10} & = 12\,{\s_{{2}}}^{5} \left( 22\,\s_{{2}}-3\,\s_{{3}} \right)  \\
c_{9} & = -4\,{\s_{{2}}}^{3} \left( 4\,{\s_{{2}}}^{3}+9\,{\s_{{3}}}^{2}-28\,{\s_{{2}}}^{2}+29\,\s_{{2}}\s_{{3}} \right)  \\
c_{8} & = {\s_{{2}}}^{3} \left( -{\s_{{3}}}^{2}-1180\,{\s_{{2}}}^{2}+16\,{\s_{{2}}}^{3}-376\,\s_{{2}}\s_{{3}} \right)  \\
c_{7} & = -\s_{{2}} \left( -3\,{\s_{{3}}}^{3}+24\,\s_{{2}}{\s_{{3}}}^{2}+48\,{\s_{{2}}}^{3}\s_{{3}}+1696\,{\s_{{2}}}^{4}+1568\,{\s_{{2}}}^{3}+536\,{\s_{{2}}}^{2}\s_{{3}} \right)  \\
c_{6} & = -26\,{\s_{{2}}}^{2} \left( 20\,\s_{{2}}\s_{{3}}+152\,{\s_{{2}}}^{2}-{\s_{{3}}}^{2}+16\,{\s_{{2}}}^{3} \right)  \\
c_{5} & = 3\,{\s_{{3}}}^{3}-24\,\s_{{2}}{\s_{{3}}}^{2}-48\,{\s_{{2}}}^{3}\s_{{3}}-1696\,{\s_{{2}}}^{4}-1568\,{\s_{{2}}}^{3}-536\,{\s_{{2}}}^{2}\s_{{3}}  \\
c_{4} & = \s_{{2}} \left( -{\s_{{3}}}^{2}-1180\,{\s_{{2}}}^{2}+16\,{\s_{{2}}}^{3}-376\,\s_{{2}}\s_{{3}} \right)  \\
c_{3} & = 112\,{\s_{{2}}}^{2}-36\,{\s_{{3}}}^{2}-16\,{\s_{{2}}}^{3}-116\,\s_{{2}}\s_{{3}} \\
c_{2} & = 12\,\s_{{2}} \left( 22\,\s_{{2}}-3\,\s_{{3}} \right)  \\
c_{1} & = 28\,\s_{{3}}-16\,{\s_{{2}}}^{2}+112\,\s_{{2}} \\
c_{0} & =  12\,\s_{{2}}\\
\end{split}
\]
Its discriminant has the following factors as in Eq.~\eqref{eq_Z2_3}. 

Each of these components can be expressed in terms of the absolute invariants $t_1, \dots t_6$ as defined in \cite{hyp_mod_3}. Since they are large expressions we do not display them. 

\qed

\noindent The following determines a nice family of curves with automorphism group $\Z_2^3$.

\begin{lemma}
Let $C$ be a genus 3 curve with equation 
\[ y^2 = {\frac {t^4}{256}}\, {x}^{8}+{\frac {t^4}{256}}\, {x}^{6}+ \frac {t^2} {32}\, \left( t+28 \right)  {x}^{4}+ \frac {t^2} {16} \, {x}^{2}+1 \]
such that $t\in \C \setminus \{ -16,  0,  48 \}$. Then, $\Aut (C) \iso \Z_2^3$ and $C$ has $N_r$ $2$-Weierstrass points of weight $r$ as described in the table below.

\begin{center}
$\begin{array}{|c|c|c|c|}
\hline
&N_1&N_2&N_3 \\ \hline
t=-112/3 & 24 & 0 & 12 \\ \hline
t=14\pm14\sqrt{-15} & 16 & 16 & 4 \\ \hline
t\in\mathbb{C}\setminus\{-16,0,48,-112/3,14\pm14\sqrt{-15}\} & 48 & 0 & 4\\ \hline
\end{array}$
\end{center}
\end{lemma}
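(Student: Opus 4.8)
The plan is to specialize the general results of the previous section to the one-parameter family at hand and then read off the three cases from a discriminant computation. First I would verify that the given curve
\[
y^2 = \frac{t^4}{256}x^8 + \frac{t^4}{256}x^6 + \frac{t^2}{32}(t+28)x^4 + \frac{t^2}{16}x^2 + 1
\]
indeed lies in $\H(\Z_2^3)$ for $t \notin \{-16,0,48\}$. This amounts to matching it with the normal form $y^2 = \s_2^2 x^8 + \s_2^2 x^6 + \frac12\s_3 x^4 + \s_2 x^2 + 1$ of Theorem~1(ii): comparing the $x^2$-coefficient gives $\s_2 = t^2/16$, comparing the $x^8$-coefficient gives $\s_2^2 = (t^2/16)^2 = t^4/256$ (consistent), and comparing the $x^4$-coefficient gives $\frac12\s_3 = \frac{t^2}{32}(t+28)$, i.e. $\s_3 = \frac{t^2}{16}(t+28) = \s_2(t+28)$. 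The excluded values $t \in \{-16,0,48\}$ are exactly those for which $\s_2 \in \{0,4\}$ (note $t=-16$ and $t=48$ both give a degenerate configuration; more precisely $\s_2 = 0 \iff t = 0$, and $\s_2 = 4 \iff t^2 = 64 \iff t = \pm 8$ — here I must be careful to recompute which values of $t$ make the octavic discriminant vanish or push the curve into a larger automorphism stratum, since the stated exclusions must be reconciled with the condition $\s_2,\s_3 \neq 0,4$ of the preceding Proposition). So the first real step is a careful bookkeeping of which $t$ are genuinely excluded and why, confirming $\Aut(C) \iso \Z_2^3$ for the rest.

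Next I would substitute $\s_2 = t^2/16$ and $\s_3 = \frac{t^2}{16}(t+28)$ into the coefficients $c_0, \dots, c_{12}$ of the degree-$12$ polynomial $g(t)$ (in the variable $u = x^2$) displayed in the proof of the Proposition, obtaining a polynomial whose coefficients are rational functions of the single parameter $t$. By Lemma~\ref{resultants} and Theorem~\ref{thm:2-w-points}, the non-branch $2$-Weierstrass points of weight $>1$ are detected by the vanishing of $\Delta(g, u)$, and by the factorization $\Delta = \bigl(-784\s_2^2 + 16\s_2^3 + 56\s_2\s_3 - \s_3^2\bigr)\,G(\s_2,\s_3)$ from Eq.~\eqref{eq_Z2_3}, it suffices to evaluate the two factors on the line $(\s_2,\s_3) = \bigl(\tfrac{t^2}{16}, \tfrac{t^2}{16}(t+28)\bigr)$. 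The quadratic factor becomes, up to a power of $t^2$, a low-degree polynomial in $t$ whose roots I expect to be $t = 14 \pm 14\sqrt{-15}$; indeed $-784\s_2^2 + 16\s_2^3 + 56\s_2\s_3 - \s_3^2 = \s_2^2\bigl(-784 + 16\s_2 + 56(t+28) - (t+28)^2\bigr)$ and the bracket, with $\s_2 = t^2/16$, is $t^2 - 28t + 3136 = (t-14)^2 + 2940$ up to sign — whose roots are $14 \pm \sqrt{-2940} = 14 \pm 14\sqrt{-15}$. The large factor $G$ restricted to this line will be a high-degree polynomial in $t$; I would factor it symbolically (Maple) and expect to find the root $t = -112/3$ appearing (together with the already-excluded values and possibly spurious factors that reduce to points outside the valid range or to branch-point coincidences).

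Having located the special parameter values, the final step is to determine the \emph{weights} $N_1, N_2, N_3$ at each. The total $2$-weight is $108$; the eight branch points $R_i^\pm$ contribute $8 \cdot 6 = 48$ (Corollary after Theorem~\ref{thm:basis-for-q-differentials}); and by Remark~\ref{rem:possible-2-gap-sequences} every non-branch point has $2$-weight $\leq 3$, so the remaining weight $60$ is distributed over non-branch points with individual weights in $\{1,2,3\}$ (plus a possible contribution at the two points over $\infty$, equal to $30 - \deg\Phi$, which for this family I would check is $2$ at each, consistent with $\deg g = 12$, i.e. $\deg \Phi = 28$, so no weight at infinity). For a generic $t$ the $60$ non-branch weight is $60 = 48\cdot 1 + 0 \cdot 2 + 4 \cdot 3$? — that does not add: rather $48$ simple roots of $\Phi$ give weight $48$ and $4$ points of weight $3$ give $12$, totalling $60$; so $N_1 = 48, N_2 = 0, N_3 = 4$, consistent with the table's generic row (these four triple points are forced by the structure of $\Phi$ and exist for \emph{every} curve in the family). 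At $t = -112/3$, the quadratic factor does not vanish but $G$ does, merging pairs of the simple roots or a simple root with one of the four triple roots; I would compute $\gcd(g, g')$ at this value to see that $24$ pairs of simple roots coalesce into $12$ double points ($24 \to 12$ points of weight... wait, weight $1$ points colliding give weight — a double root of $\Phi$ is a point of $2$-weight $2$), so one gets $N_3 = 12$ (the original four triples plus eight more from collisions, or a rearrangement) and $N_1 = 24, N_2 = 0$; the exact combinatorics I would pin down by direct factorization of $g$ at $t=-112/3$. Similarly at $t = 14 \pm 14\sqrt{-15}$ the quadratic factor vanishes, forcing $16$ double roots: $N_1 = 16, N_2 = 16, N_3 = 4$. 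The main obstacle is the explicit symbolic factorization of $\Delta(g,u)$ as a polynomial in $t$ and the careful collision analysis (via $\gcd(g,g')$) needed to convert "discriminant vanishes" into the precise multiplicities in the table; this is a finite but delicate Maple computation, and matching it against the genus-$3$ constraint $N_1 + 2N_2 + 3N_3 = 60$ provides the consistency check at each row.
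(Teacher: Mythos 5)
There is a genuine gap at the heart of your strategy. The parametrization $\s_2=t^2/16$, $\s_3=\tfrac{t^2}{16}(t+28)$ is precisely a parametrization of the \emph{first component} of Eq.~\eqref{eq_Z2_3}: substituting into the bracket you compute gives
\[
-784+16\s_2+56(t+28)-(t+28)^2=-784+t^2+56t+1568-t^2-56t-784=0,
\]
identically in $t$, not $t^2-28t+3136$ as you claim. Consequently the full discriminant $\Delta(g,u)$ of Eq.~\eqref{eq_Z2_3} vanishes for \emph{every} $t$ in this family, so evaluating its two factors along the line cannot isolate the special values $t=-112/3$ and $t=14\pm14\sqrt{-15}$; there is also no reason for $G(\s_2,\s_3)$ restricted to this line to vanish at $t=-112/3$, since $G=0$ is the other component. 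You in fact notice the symptom yourself when you observe that four weight-$3$ points exist for all $t$ in the family --- that observation already forces $\Delta(g,u)\equiv 0$ and is incompatible with your plan of reading the special $t$ off the factors of Eq.~\eqref{eq_Z2_3}. (Your guess $t^2-28t+3136$ is the right polynomial, but it does not arise where you say it does.)

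What is actually needed --- and what the paper does --- is to compute the Wronskian for this one-parameter family directly, observe that its numerator factors as $x\,(tx^2+4)(tx^2-4)^3$ times further polynomials in $x^2$ (the cube accounting for the four weight-$3$ points present at every $t$), divide out $x(tx^2-4)^3$, and take the discriminant of the remaining degree-$11$ polynomial $h$ in $x^2$. That discriminant factors as $2^{289}3^97^3\,t^{93}(16+t)^{14}(3t+112)^6(t-48)^6(t^2-28t+3136)^4$, whose admissible roots are exactly $t=-112/3$ and $t=14\pm14\sqrt{-15}$; explicit factorization of $h$ at those values then gives the multiplicities in the table. Your final weight bookkeeping (total weight $108$, branch points contributing $48$, the check $N_1+2N_2+3N_3=60$) is sound, and your handling of the points at infinity should be cleaned up (the numerator has degree $29$, so each point at infinity has weight $1$, not $0$), but without replacing the Eq.~\eqref{eq_Z2_3} step by a discriminant of the \emph{reduced} Wronskian the argument does not locate the special parameters.
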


\proof
Let us assume that the dihedral invariants satisfy the first factor of the Eq.~\eqref{eq_Z2_3}. Since this is a rational curve we can parametrize it as follows
\[ \s_2 = \frac 1 {16} \, t^2 \quad \s_3 = \frac 1 {16} \, ( t + 28) \, t^2 \]
In this case the curve $C$ becomes 
\[ y^2 = {\frac {t^4}{256}}\, {x}^{8}+{\frac {t^4}{256}}\, {x}^{6}+ \frac {t^2} {32}\, \left( t+28 \right)  {x}^{4}+ \frac {t^2} {16} \, {x}^{2}+1 \]
with discriminant $\Delta = t^{28} \, (t-48)^4 \, (t+16)^6 \neq 0$. 
The Wronskian is 
\begin{Small}
\[
\begin{split} 
\Omega_2 & = \frac {x \, (tx^2+4) (tx^2-4)^3}     { ( {t}^{4}{x}^{8}+{t}^{4}{x}^{6}+8\,{t}^{3}{x}^{4}+224\,{t}^{2}{x}^{4}+16\,{t}^{2}{x}^{2}+256)^{9/2}}  \, \left( {t}^{2}{x}^{4}+24\,t{x}^{2} +16 \right) \, ( 3t^8 x^{16}  \\
& -4\,{t}^{6} \left( -16\,t+{t}^{2}-896 \right) x^{14} -16\,{t}^{5} \left( 5\,{t}^{2}+3584+220\,t \right)  x^{12} -192\,{t}^{4} \left( 9\,{t}^{2}+2688+368\,t \right)  x^{10} \\
& -512\,{t}^{3} \left( 487\,t+3584+23\,{t}^{2} \right)  x^8 -3072\,{t}^{2} \left( 9\,{t}^{2}+2688+368\,t \right)  x^6   -4096\,t \left( 5\,{t}^{2}+3584+220\,t \right)  x^4 \\
& + (14680064+262144\,t-16384\,{t}^{2})  x^2 + 196608 )(dx)^{27}
\end{split}
\]
\end{Small}
Hence, the curve has four $2$-Weierstrass points of weight 3 which come from the two roots of the factor $(tx^2-4)^3=0$.  Note that $x=0$ is a root of order 1, so the points $(0,\pm1)$ have weight 1.  Removing these factors as well as the denominator, we obtain a polynomial in $x^2$ which we can write as $$h(x^2)=\Omega_2\cdot\frac{( {t}^{4}{x}^{8}+{t}^{4}{x}^{6}+8\,{t}^{3}{x}^{4}+224\,{t}^{2}{x}^{4}+16\,{t}^{2}{x}^{2}+256)^{9/2}}{x(tx^2-4)^3}$$ for $\deg(h(x))=11$.  We now check $h(x)$ for multiple roots.

One finds that $$\Delta(h,x)=2^{289}3^97^3\cdot t^{93}(16+t)^{14}(3t+112)^6(t-48)^6(t^2-28t+3136)^4.$$  Since we do not consider the cases where $t=0,-16,48$, to make $\Delta(h,x)=0$, we look at $t=-112/3$ and $t=14\pm14\sqrt{-15}.$

When $t=-112/3$, $$h(x)=c(784x^2-504x+9)^3(28x-3)(81 + 168 x + 784 x^2) (3 + 56 x + 2352 x^2)$$ for some constant $c$.  Thus, $h(x)$ has two roots of order 3 and five roots of order 1.  Going back to $\Omega_2$, these roots lead to eight 2-Weierstrass points of weight 3 and twenty 2-Weierstrass points of weight 1.

When $t=14\pm14\sqrt{-15}$, $h(x)$ has four roots of order 2 and 3 roots of order 1.  These lead to sixteen 2-Weierstrass points with weight 2 and twelve 2-Weierstrass points with weight 1.

Note that for any $t\neq0$, the numerator of $\Omega_2$ is a polynomial of degree 29, so the two points at infinity are 2-Weierstrass points with weight 1.

\qed

The other component is also a genus 0 curve and the same method as above can also be used here.

\begin{thm}
The locus in $\H_3$ of curves with full automorphism group $\Z_2^3$ which have $2$-Weierstrass points is a 1-dimensional variety with two irreducible components.  Each component is a rational family. The equation of a generic curve in each family is given in terms of the parameter $t$.  For such curves the field of moduli is a field of definition. 
\end{thm}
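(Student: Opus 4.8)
The plan is to deduce the locus directly from the Proposition above and then carry out a genus computation on each of its two defining factors. First, by part (ii) of the classification theorem, $\H(\Z_2^3)$ is the two-dimensional locus with coordinates the dihedral invariants $(\s_2,\s_3)$ (subject to $\s_2,\s_3\neq 0,4$), a generic member having equation $y^2=\s_2^2x^8+\s_2^2x^6+\tfrac12\s_3x^4+\s_2x^2+1$. By the Proposition, a curve in this locus has a non-branch $2$-Weierstrass point of weight $>1$ --- which is what ``has $2$-Weierstrass points'' must mean here, since the eight branch points, and generically the two points over $\infty$, are $2$-Weierstrass points for \emph{every} member of the family --- exactly when its invariants satisfy $\Delta=Q(\s_2,\s_3)\,G(\s_2,\s_3)=0$, with $Q=16\s_2^3-784\s_2^2+56\s_2\s_3-\s_3^2$ and $G$ the polynomial displayed in the Proposition. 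As this is a single non-trivial equation inside a two-dimensional space, the resulting locus $V$ is a curve, and its irreducible components lie among those of $\{Q=0\}$ and $\{G=0\}$; so everything reduces to showing that each of these two plane curves is irreducible of geometric genus $0$ and that they are distinct (i.e.\ that $Q\nmid G$).

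For the first factor this is immediate: $Q=16\s_2^3-(\s_3-28\s_2)^2$ is a cuspidal cubic, hence irreducible with geometric genus $0$, and $t\mapsto(\s_2,\s_3)=\bigl(\tfrac1{16}t^2,\ \tfrac1{16}(t+28)t^2\bigr)$ is a rational parametrisation. Substituting it into the generic equation recovers the one-parameter family, the branch discriminant $t^{28}(t-48)^4(t+16)^6$ (whence the excluded values $t\in\{-16,0,48\}$), and the distribution of $2$-Weierstrass points, exactly as in the Lemma above. This disposes of $\{Q=0\}$.

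The real work is the factor $G$. I would first verify, with a computer algebra system, that $G$ is absolutely irreducible and that $Q\nmid G$, so that $\{G=0\}$ is a second irreducible component of $V$, distinct from $\{Q=0\}$; then compute the geometric genus of the plane curve $\{G=0\}$ and check that it is $0$; and then produce an explicit rational parametrisation $t\mapsto(\s_2(t),\s_3(t))$ of it, either via a standard genus-$0$ parametrisation routine or by projecting from the singular point of largest multiplicity. Substituting this parametrisation into $y^2=\s_2^2x^8+\cdots+1$ gives the generic member of the second family, and re-running the computation of the Lemma for it --- form $\Omega_2$, divide out the contributions of the eight branch points and of the two points at infinity, and read the $2$-weights off the discriminant and the factorisations of the residual polynomial of degree $\le 11$ in $x$ --- describes its $2$-Weierstrass points. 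Together with the first component, this shows $V$ is one-dimensional with exactly the two stated rational components.

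For the final assertion I would invoke the remark on fields of moduli following the classification theorem (cf.\ \cite{b-th}): since $\Z_2^3$ is not isomorphic to $V_4$, the field of moduli of a curve with $\Aut(C)\iso\Z_2^3$ is a field of definition --- the model $y^2=\s_2^2x^8+\cdots+1$ is defined over $\Q(\s_2,\s_3)$, and $\s_2,\s_3$ are rational functions of the absolute invariants $t_1,\dots,t_6$ of \cite{hyp_mod_3}. Along each of the two components the invariants $\s_2,\s_3$ are polynomials in $t$ with coefficients in $\Q$, so the generic member is defined over $\Q(t)$, which is its field of moduli. The main obstacle is the treatment of $\{G=0\}$: $G$ has degree $15$ in $\s_2$ and $9$ in $\s_3$, so establishing its absolute irreducibility, computing its geometric genus, and --- above all --- extracting a usable rational parametrisation is the computational heart of the proof, whereas the cuspidal-cubic parametrisation, the two Wronskian specialisations, and the field-of-moduli step are routine once the Proposition and the Lemma are in hand. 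One further point to keep honest is that the excised values $\s_2,\s_3\in\{0,4\}$ and the points where the various discriminants vanish remove only finitely many points from each component, so that the Zariski closures are genuinely the two irreducible curves asserted.
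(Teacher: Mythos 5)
Your proposal is correct and follows essentially the same route as the paper: the Proposition reduces the question to the vanishing of the two factors of $\Delta$, the cuspidal cubic $16\s_2^3-(\s_3-28\s_2)^2=0$ is parametrized exactly as in the preceding Lemma, and the second component $\{G=0\}$ is handled by the same genus-$0$ parametrization method (which the paper only asserts in one sentence, so your explicit listing of the needed verifications --- absolute irreducibility, $Q\nmid G$, geometric genus $0$ --- is if anything more careful than the original). The interpretation of ``has $2$-Weierstrass points'' as ``has non-branch $2$-Weierstrass points of weight greater than one'' and the appeal to the field-of-moduli remark also match the paper's intent.
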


\subsection{1-dimensional loci}
There are three cases of groups which correspond to 1-dimensional loci in $\H_3$, namely the groups $\Z_2 \times D_8$, $D_{12}$, and $\Z_2 \times \Z_4$. 


\subsubsection{The case $\Aut (C) \iso \Z_2 \times D_8$.}

\begin{prop}
Let $C$ be a genus 3 hyperelliptic curve with full automorphism group $\Z_2 \times D_8$.  Then $C$ is isomorphic to a curve of the form $y^2=tx^8+tx^4+1$ for some $t\neq0,4.$  For any other $t\neq-140,-980/3$, $C$ has $N_r$ $2$-Weierstrass points of weight $r$ as described in the table below.

\begin{center}
$\begin{array}{|c|c|c|c|}
\hline
&N_1&N_2&N_3 \\ \hline
t=196 & 24 & 0 & 12 \\ \hline
t=-196/5 & 16 & 16 & 4 \\ \hline
t\in\mathbb{C}\setminus\{0,4,-140,-980/3\} & 48 & 0 & 4\\ \hline
\end{array}$
\end{center}

Then, $C$ has at least two $2$-Weierstrass points of weight 3.  Moreover, if   $C$ is isomorphic to   the   curve  $y^2= t x^8 + t x^4 +1$  for $t = 196 $ (resp. $t= - \frac {196} {15}$),  then $C$ has in addition 8 other points of weight 3 (resp.  16 points of weight 2). 
\end{prop}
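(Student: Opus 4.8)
The plan is to follow exactly the template established in the preceding $\Z_2^3$ analysis, since the group $\Z_2\times D_8$ corresponds to the subvariety $a=c$ of the octavic locus, i.e.\ a curve $y^2 = tx^8 + tx^4 + 1$ which is manifestly invariant under $\tau_2\colon x\mapsto 1/x$. First I would specialize Theorem~\ref{thm:2-w-points}: the $2$-Weierstrass points away from the branch points are governed by $f^{(5)}(x)$ where $f(x) = (tx^8+tx^4+1)^{1/2}$, equivalently by the polynomial $\Phi(x)$ appearing in $\Omega_2 = W(1,x,x^2,x^3,x^4,y)(dx)^{27}$. Using Maple (as in the body of the paper), I would compute $\Omega_2$ explicitly for $y^2 = tx^8+tx^4+1$ and observe that it factors as $x\cdot(\text{low-degree factor})\cdot g(x^2)(dx)^{27}$, where the ``obvious'' small factors correspond to the forced $2$-weight-$1$ point $(0,\pm1)$ and to a cube factor producing four weight-$3$ points (this is the ``at least two $2$-Weierstrass points of weight $3$'' assertion — indeed one gets four, matching the $N_3 = 4$ generic entry in the table).

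Next I would strip off the denominator and the known small factors to isolate a degree-$11$ polynomial $h(x)$ (in the variable $x$, or equivalently a polynomial in $t=x^2$ of half the degree, to which Lemma~\ref{resultants} applies), whose multiple roots detect the extra $2$-Weierstrass weight. I would then compute $\Delta(h,x)$ symbolically; by Lemma~\ref{resultants} this discriminant factors through $a_0 a_n \D(\tilde h,t)^2$, and I expect it to come out as a constant times a product of powers of $t$, of $(t-4)$, and of the three ``interesting'' factors $(t-196)$, $(5t+196)$, and a quadratic or pair of linear factors vanishing at $t=-140$ and $t=-980/3$. The excluded values $t=0,4$ are where the octavic degenerates (discriminant zero) or the automorphism group jumps; the values $t=-140, -980/3$ are excluded in the proposition's hypothesis and presumably correspond to further automorphism jumps or to $c_0c_{14}=0$ type degeneracies, so the remaining roots of $\Delta(h,x)=0$ that lie in the allowed range are precisely $t=196$ and $t=-196/5$.

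For the two special values I would substitute back into $h(x)$ and factor it completely: at $t=196$ I expect $h(x)$ to acquire two roots of multiplicity $3$ and five simple roots, giving (via the $x\mapsto x^2$ doubling) $8$ additional points of weight $3$ on top of the generic four, hence $N_3 = 12$ and $N_1 = 24$ as in the table; at $t=-196/5$ I expect $h(x)$ to have four double roots and three simple roots, giving $16$ points of weight $2$, hence $N_2 = 16$, $N_3 = 4$, $N_1 = 16$. The final bookkeeping step is to account for the two points at infinity: for generic $t\neq 0$ the numerator of $\Omega_2$ has degree $29$, so $30 - \deg\Phi = 1$ and each point at infinity contributes weight $1$; I would check the total adds up to $108 = 12(2\cdot2-1)^2$ in each row of the table as a consistency test.

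The main obstacle I anticipate is purely computational bookkeeping rather than conceptual: correctly identifying which small factors of $\Omega_2$ are ``spurious'' (coming from the chosen basis / from $\div(dx)^{27}$ / from $y^{21}$ in the denominator) versus genuine zeros of the Wronskian form, and then translating multiplicities of roots in $x$ into counts of points on $C$ through the two-sheeted structure and the $x\mapsto x^2$ substitution — it is easy to be off by a factor of two or to miscount the weight-$1$ contributions from $(0,\pm1)$ and from infinity. A secondary subtlety is verifying that at $t=196$ and $t=-196/5$ the automorphism group does \emph{not} jump further (so that these curves genuinely lie in $\H(\Z_2\times D_8)$ and not in a smaller locus), which can be confirmed by checking the dihedral invariants or the absolute invariants $t_1,\dots,t_6$ against the classification in \cite{b-th, hyp_mod_3}; once that is in hand the proposition follows by collecting the cases.
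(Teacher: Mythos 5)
Your high-level strategy---compute $\Omega_2$ for $y^2=tx^8+tx^4+1$, peel off the visible factors, take the discriminant of the remaining factor, and analyze the special values of $t$---is exactly what the paper does. However, you have transferred the $\Z_2^3$ template too literally, and the concrete structural claims on which your weight-counting rests are wrong for this curve. The numerator of $\Omega_2$ here is $x^3\,(tx^8-1)\,(7t^2x^{16}-18t^2x^{12}+3t^2x^8-98tx^8-18tx^4+7)$, which has degree $27$, not $29$. Consequently: (i) the points $(0,\pm1)$ each have $2$-weight $3$, not $1$; these, coming from the factor $x^3$, are precisely the ``at least two $2$-Weierstrass points of weight $3$'' of the statement, whereas generically there is \emph{no} cube factor at all; (ii) the two points at infinity have weight $30-27=3$, not $1$, and together with $(0,\pm1)$ they account for the generic $N_3=4$; (iii) the polynomial whose discriminant detects extra weight is the degree-$24$ factor above (a degree-$6$ polynomial in $x^4$, so Lemma~\ref{resultants} applies, but it is not the degree-$11$ polynomial of the $\Z_2^3$ lemma). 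Your final table entries happen to agree only because the weight you misplace at $(0,\pm1)$, at infinity, and in the phantom cube factor cancels in the totals; as a proof of the specific assertions in the statement (in particular the provenance of the weight-$3$ points) the argument as written does not go through.

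Two smaller points. The discriminant of the degree-$24$ factor vanishes at $t=196$ and at $t=-196/15$, not $t=-196/5$; the table in the statement appears to carry a typo, and both the paper's proof and the final sentence of the statement work with $-196/15$, where the Wronskian acquires the square factor $\left(196x^8-252x^4-15\right)^2$ producing the sixteen weight-$2$ points (your predicted shape ``four double roots of an $x^2$-polynomial'' is consistent with this). Also, the excluded values $t=-140,-980/3$ are never addressed in the paper's proof, so your guess about their meaning is speculation the paper does not settle; the values where the octavic degenerates are $t=0,4$, and those are the only exclusions the proof actually uses.
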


\proof
In this case the curve has equation $y^2= t x^8 + t x^4 +1$, 
with discriminant $\Delta = 2^{16} \cdot t^7 \, (t-4)^4 \neq 0$, where $t = - 28 \frac {5 t_4 +28} {t_4-4}$; see \cite[Lemma 7]{b-th}.   The Wronskian is as follows
\[ \Omega_2 = 34560\, (t-4) \, {\frac {t{x}^{3}  \left( t{x}^{8}-1 \right)  \left( 7\,{t}^{2}{x}^{16}-18\,{t}^{2}{x}^{12}+3\,{t}^{2}{x}^{8}-98\,t{x}^{8}-18\,t{x}^{4}+7 \right) }{ \left( t{x}^{8}+t{x}^{4}+1 \right) ^{9/2}}}(dx)^{27} \]
Since $x=0$ is of multiplicity 3, then the points $(0, \pm 1)$ have each weight 3. 

The other factors of the Wronskian, namely 
\[ f(x) =  \left( t{x}^{8}-1 \right)  \left( 7\,{t}^{2}{x}^{16}-18\,{t}^{2}{x}^{12}+3\,{t}^{2}{x}^{8}-98\,t{x}^{8}-18\,t{x}^{4}+7 \right)  \]
has double roots if its discriminant is zero.  This happens if  $t = 196$ or $t = - \frac {196} {15}$. 
If $t=196$ then 
\[ \Omega_2 = 9103933440\,{\frac {{x}^{3} \left( 14\,{x}^{4}+1 \right)  \left( 196\,
{x}^{8}-476\,{x}^{4}+1 \right)  \left( 14\,{x}^{4}-1 \right) ^{3}}{
 \left( 196\,{x}^{8}+196\,{x}^{4}+1 \right) ^{9/2}}}(dx)^{27}
\]
Hence, there are 24 points of weight 1, and 8 other  points of weight 3 which come from the roots of $14x^4 =1$.

If $t= - \frac {196} {15}$, then the curve $C$ becomes 
\[ y^2= -{\frac {196}{15}}\,{x}^{8}-{\frac {196}{15}}\,{x}^{4}+1     \]
and the Wronskian
\[
\Omega_2 = -614515507200000\,{\frac {{x}^{3} \left( 15+196\,{x}^{8} \right)  \left( -15-252\,{x}^{4}+196\,{x}^{8} \right) ^{2}}{ \left( -15\, \left( 14\,{x}^{4}+15 \right)  \left( 14\,{x}^{4}-1 \right)  \right) ^{9/2}}(dx)^{27}}
\]
Hence, there are 16 points of weight 1 and 16 points of weight 2.

Finally, observe that since the numerator of $\Omega_2$ is a polynomial in $x$ of degree 27, the two points at infinity have $2$-weight equal to $30-27=3$.
\qed

\subsubsection{The case $\Aut (\X) \iso D_{12}$}

Let $C$ be a genus 3 hyperelliptic curve with full automorphism group $D_{12}$. In this case the curve has equation 
\[ y^2= x \, ( tx^6 + t x^3 +1) \]
for $ t = \frac 7 2 \, \frac {5 t_4 +7} {t_4-2} $  and  discriminant  $\D = 3^6 \cdot t^5 \, (t-4)^3 \neq 0$;  see \cite[Lemma ]{b-th} for details.

In particular, for a curve $C$ given by the equation $y^2=f(x)$, with $\deg(f)=7$, there is one point at infinity, which is singular.  This point is a branch point, and in the desingularization remains as one point, which we will  denote here by $P^{\infty}$.  Let $\{\alpha_i\}$ denote the roots of $f(x)$, and $R_i=(\alpha_i,0)$ the affine branch points.  Let $w\in\mathbb{C}\setminus\{\alpha_i\}$ and let $P^w_1$ and $P^w_2$ denote the points over $w$.  One has the following divisors.

\begin{center}
$\begin{array}{|l|l|} \hline & \\
\div (dx)=\displaystyle\left(\sum_{i=1}^7 R_i\right) - 3P^\infty & 
\div (y)=\displaystyle\left(\sum_{i=1}^7 R_i\right) - 7P^\infty \\ & \\ \hline & \\
\div (x-w) = P_1^w+P_2^w - 2P^\infty & 
\div (x- \alpha_i) = 2R_i - 2P^\infty \\ & \\
 \hline
\end{array}$\end{center}\bigskip

Working with these divisors, one finds that a basis of holomorphic $2$-differentials is given by $$\frac{1}{y^2}(dx)^2\cdot\left\{1,(x-\beta),(x-\beta)^2,(x-\beta)^3,(x-\beta)^4,y\right\}$$ for any $\beta\in\mathbb{C}$.  Letting $\beta=\alpha_i$, the $2$-Weierstrass weight for the branch point $R_i$ is 6.  And using any value of $\beta$, one finds orders of vanishing $8,6,4,2,0,1$ at $P^\infty$, so $w^{(2)}(P^\infty)=6$ as well.

\begin{prop}
Let $C$ be a genus 3 hyperelliptic curve with full automorphism group $D_{12}$.  By \cite[Lemma 8]{b-th}, $C$ has equation $y^2=x(tx^6+tx^3+1)$. Then, $C$ has non-branch points with $2$-Weierstrass weight greater than 1 if and only if $t = - \frac {49} 8$ or $t= \frac {1787} 8 \pm \frac {621} 4 \sqrt{2}$. 

In particular, for each value of $t$, $C$ has $N_r$ $2$-Weierstrass points of weight $r$ as described in the table below.

\begin{center}
$\begin{array}{|c|c|c|c|}
\hline
&N_1&N_2&N_3 \\ \hline
t=-49/8 & 24 & 0 & 12 \\ \hline
t=1787/8\pm621/4\sqrt{2} & 36 & 12 & 0 \\ \hline
t\in\mathbb{C}\setminus\{0,4,-49/8,1787/8\pm621/4\sqrt{2}\} & 60 & 0 & 0\\ \hline
\end{array}$
\end{center}

\end{prop}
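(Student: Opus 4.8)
The plan is to specialize the general machinery of Section~\ref{sec:classification-q-w-points} to the curve $y^2 = x(tx^6+tx^3+1)$ and extract the condition on $t$. First I would write down the Wronskian $\Omega_2 = W(1,x,x^2,x^3,x^4,y)(dx)^{27}$ with respect to the basis $\frac{1}{y^2}(dx)^2\cdot\{1,x,x^2,x^3,x^4,y\}$ of holomorphic $2$-differentials (valid here since $\deg f = 7$ and the basis argument works for $\beta=0$, as recorded just before the proposition). Using Maple, $W$ has the form $c_0\,\Psi(x)/y^{21}$ for a polynomial $\Psi(x)$; the non-branch $2$-Weierstrass points are exactly the points over the roots of $\Psi$, with weight equal to the multiplicity of the root. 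Because the defining polynomial $x(tx^6+tx^3+1)$ is built from $x^3$, the Wronskian $\Psi(x)$ should factor through a power of $x$ times a polynomial in $u=x^3$, so the substitution $u=x^3$ reduces the degree and makes the discriminant computation tractable — this is the analogue of Lemma~\ref{resultants} for the $n=3$ superelliptic-type structure here, applied to the relevant reduced polynomial.

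Next I would separate the trivial factors of $\Psi(x)$. As in the $D_{12}$ discussion preceding the proposition, $x=0$ contributes the point(s) $(0,0)$ on the branch locus (weight $6$, already accounted for among branch points) or, after desingularization, a low-order factor; one checks directly the order of vanishing of $x$ in $\Psi$ and peels it off, together with the factor coming from $tx^6+tx^3+1$ (whose roots are branch points $R_i$, each of weight $6$, again already counted). What remains is a polynomial $h(u)$ in $u=x^3$ whose multiple roots govern the genuinely new non-branch $2$-Weierstrass points. I would then compute $\Delta(h,u)$ symbolically; its factorization over $\C[t]$, after discarding the forbidden factors (those corresponding to $t=0$, $t=4$, and any spurious factors like $t-140$ or $3t+980$ that make the peeled-off pieces degenerate rather than producing higher-weight points), should yield exactly the factors $(8t+49)$ and $(8t^2 - 1787 t + \tfrac{1787^2 - 621^2\cdot 8}{\,\cdots\,})$ — equivalently $t = -\tfrac{49}{8}$ and $t = \tfrac{1787}{8} \pm \tfrac{621}{4}\sqrt{2}$. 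For the two special values I would re-substitute $t$ into $\Omega_2$, factor the numerator explicitly, read off the multiplicities of the roots (counting each root of $h(u=x^3)$ as giving several points $P_m^w$ via the cube roots, and each such point with the corresponding multiplicity-weight), and thereby fill in the columns $N_1, N_2, N_3$ of the table; the generic row follows by noting that for generic $t$ the numerator of $\Omega_2$ is squarefree away from the branch contributions, giving $60$ simple non-branch $2$-Weierstrass points of weight $1$ and the branch points $R_1,\dots,R_7$ and $P^\infty$ of weight $6$, totalling $60 + 8\cdot 6 = 108$ as required.

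Finally I would record the consistency check: in each row the weighted count $\sum_r r\,N_r$ plus $48$ (the eight branch points of weight $6$, including the point at infinity) equals $108$, matching the total $2$-weight $12(2\cdot 2 - 1)^2 = 108$ from the Example on genus $3$ curves; this both validates the computation and shows no $2$-Weierstrass points have been missed. The main obstacle I anticipate is purely computational: controlling the size of $\Psi(x)$ and especially of $\Delta(h,u)$ as a polynomial in $t$, and correctly identifying which irreducible factors of that discriminant are "real" (producing weight $\ge 2$ points) versus which are artifacts of the factors that were peeled off beforehand. Keeping the bookkeeping of multiplicities straight when passing from roots in $u=x^3$ back to points on $C$ — a root of $h$ of multiplicity $k$ lifts to three values of $x$, each giving two points $P_1^w, P_2^w$, and the weight attached is $k$ — is the place where an error is most likely, so I would double-check it against the total-weight identity for every value of $t$ in the table.
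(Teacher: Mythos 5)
Your proposal follows essentially the same route as the paper's proof: compute the Wronskian numerator, take its discriminant as a polynomial in $t$ (the paper obtains $t^{145}(t-4)^{42}(64t^2-28592t+108241)^9(8t+49)^{12}$), read off the special values $t=-\frac{49}{8}$ and $t=\frac{1787}{8}\pm\frac{621}{4}\sqrt{2}$, then factor the numerator explicitly at those values to get the multiplicities in the table, checking everything against the total weight $108$ with the eight branch points (including $P^\infty$) contributing $48$. Two harmless slips worth fixing: the numerator of $\Omega_2$ here carries the non-branch factor $(tx^6-1)$ rather than a copy of $tx^6+tx^3+1$ (the latter lives only in the denominator, so there is nothing to peel off for the affine branch points), and the extraneous values $t=-140$ and $t=-980/3$ you propose to discard belong to the $\Z_2\times D_8$ case, not to $D_{12}$; neither affects the validity of the argument.
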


\proof  In this case the curve has equation  $y^2= x \, ( tx^6 + t x^3 +1)$ for $ t = \frac 7 2 \, \frac {5 t_4 +7} {t_4-2} $  and  discriminant  $\D = 3^6 \cdot t^5 \, (t-4)^3 \neq 0$; see \cite[Lemma ]{b-th} for details. The Wronskian is 
\[
\begin{split}
\Omega_2 & = -135\,{\frac { \left( t{x}^{6}-1 \right)   } { \left( x \left( t{x}^{6}+t{x}^{3}+1 \right)  \right) ^{9/2}}} \, \left( 7\,{t}^{4}{x}^{24}+28   \,{t}^{4}{x}^{21}-336\,{t}^{4}{x}^{18}+1216\,{t}^{4}{x}^{15}  \right. \\
& \left. -128\,{t}^{4}{x}^{12}+1540\,{t}^{3}{x}^{18}-4668\,{t}^{3}{x}^{15}+6672\,{t}^{3}{x}^{12}+1216\,{t}^{3}{x}^{9}-24150\,{t}^{2}{x}^{12} \right.\\
& \left. -4668\,{t}^{2}{x}^{9}-336\,{t}^{2}{x}^{6}+1540\,t{x}^{6}+28\,t{x}^{3}+7 \right)(dx)^{27} \\
\end{split}
\]
Its discriminant has factors
\[ \Delta (\Omega_2 , x) = t^{145} \, (t-4)^{42} \, \left( 64\,{t}^{2}-28592\,t+108241 \right)^9 \,  \left( 8\,t+49 \right)^{12} \]
Since $t \neq 0, 4$, then the $\Omega_2$ form has multiple roots if and only if 
\[ t = - \frac {49} 8, \quad t= \frac {1787} 8 + \frac {621} 4 \sqrt{2}, \quad t= \frac {1787} 8 - \frac {621} 4 \sqrt{2}. \]
For each one of these values of $t$, the form $\Omega_2$ has multiple zeroes and hence 2-Weierstrass points of weight at least 2.

For $t = - \frac {49} 8$ the numerator of $\Omega_2$ is the polynomial 
\[  \left( 49\,{x}^{6}+8 \right)  \left( 49\,{x}^{6}+616\,{x}^{3}-8 \right)  \left( 49\,{x}^{6}-140\,{x}^{3}-8 \right) ^{3}
\]
which has six roots of multiplicity 3. Hence, the curve $ y^2 = x \left( -{\frac {49}{8}}\,{x}^{6}-{\frac {49}{8}}\,{x}^{3}+1 \right)$ 
has twelve 2-Weierstrass points of weight 3. There are twelve simple roots of the polynomial $\Omega_2 (x)$ and therefore twenty-four points of weight 1. 

For $t= \frac{1787}8 \pm \frac{621}4 \sqrt{2}$, the numerator of $\Omega_2$ is the polynomial
\[ 
 \left(108241 x^6 - (60536 \pm 35532 \sqrt2) x^3 +(14296 \pm 9936 \sqrt2)  \right)^2 g(x)\] for $g(x)$ a degree-18 polynomial with coefficients in $\mathbb{Z}[\sqrt{2}]$ and distinct roots.  The numerator of $\Omega_2$ has six double roots which lead to twelve 2-Weierstrass points of weight 2.  The remaining eighteen roots are single roots, leading to thirty-six 2-Weierstrass points of weight 1.

Finally, note that in both cases, the 2-Weierstrass points we have calculated make a contribution of 60 to the total weight.   The eight branch points (including the point at infinity) each have 2-Weierstrass weight 6, thus making a contribution of 48 to the total weight, which is 108.

\qed

\begin{rem}
Notice that in the case of the curve  $ y^2 = x \left( -{\frac {49}{8}}\,{x}^{6}-{\frac {49}{8}}\,{x}^{3}+1 \right)$, even though the curve is defined over $\Q$ the $2$-Weierstrass points are defined over a degree 6 extension of $\Q$.  For more details on the field of definition of $q$-Weierstrass points see \cite{silverman}. 
\end{rem}

\subsubsection{The case $\Aut (\X) \iso \Z_2 \times \Z_4$}   

\begin{prop}
Let $C$ be a genus 3 hyperelliptic curve with full automorphism group  $\Z_2 \times \Z_4$.  
Then, $C$ has $2$-Weierstrass points if and only if $C$ is isomorphic to one of the   curves  $y^2=\left( t x^4-1 \right) \, \left( t x^4 + tx^2+1\right)  $, for $t = -  8$ or it is a root of  
\begin{small}
\begin{equation}
\begin{split}
  \, \, {t}^{8}+600822\,{t}^{7}+71378609\,{t}^{6}+4219381768\,{t}^{5}+85080645104\,{t}^{4}   -2272444082944\,{t}^{3} \\
   +16480136388352\,{t}^{2}-50330309965824\,t+56693912375296  & = 0
\end{split}
\end{equation} 
\end{small}
In the first case, the curve has two  $2$-Weierstrass points of weight 3. 
\end{prop}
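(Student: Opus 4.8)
The plan is to follow the same template that worked for the groups $\Z_2^3$, $\Z_2 \times D_8$, and $D_{12}$: compute the Wronskian $\Omega_2$ explicitly for the generic curve in the $\Z_2 \times \Z_4$-locus, factor out the trivial contributions, and detect repeated roots via a discriminant. First I would write $C$ in the normalized form $y^2 = \left( t x^4 - 1 \right) \left( t x^4 + t x^2 + 1\right)$ coming from \cite[Lemma 9]{b-th}, with discriminant a nonzero monomial-times-linear expression in $t$ so that the excluded values of $t$ are pinned down. Then, using the basis $\{1, x, x^2, x^3, x^4, y\}/y^2$ of holomorphic $2$-differentials from Theorem~\ref{thm:basis-for-q-differentials}, I would have Maple compute $W(1,x,x^2,x^3,x^4,y)$ and write $\Omega_2 = W (dx)^{27}$ in the form (numerator polynomial in $x$) over (defining octavic)$^{9/2}$.

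Next I would identify the \emph{forced} factors of the numerator. By the general analysis in Section~\ref{sec:classification-q-w-points}, the eight branch points each contribute $2$-weight $6$ (a total of $48$), and since the numerator of $\Omega_2$ has degree at most $29$, the two points at infinity contribute $2$-weight $30 - \deg$, which here should account for the two stated weight-$3$ points at infinity (so $\deg$(numerator)$= 24$ exactly, or there is an automorphism-forced factor such as a power of $x$ as in the $\Z_2 \times D_8$ and $D_{12}$ cases — the symmetry $x \mapsto 1/x$ or $x \mapsto -x$ of the model will dictate which). After dividing out these forced factors one is left with a polynomial $h$ in $x$ (equivalently in $x^2$, by the even symmetry of the model, so that Lemma~\ref{resultants} applies), and the non-branch $2$-Weierstrass points of weight $>1$ occur exactly at the multiple roots of $h$; these exist if and only if $\Delta(h,x) = 0$.

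The core computation is then $\Delta(h,x)$. Using Lemma~\ref{resultants} to halve the degree, I would compute the discriminant of the corresponding polynomial in $t_0 = x^2$ (or $x^4$, if the $\Z_4$-symmetry permits a further reduction), obtaining a polynomial in $t$ which factors as (power of $t$)$\cdot$(power of $t-4$)$\cdot (t+8)\cdot p(t)$, where $p(t)$ is the stated degree-$8$ polynomial. Discarding the excluded values $t = 0, 4$, the curve acquires extra $2$-Weierstrass points precisely when $t = -8$ or $p(t) = 0$, which is the claimed dichotomy. To finish the ``first case'' claim, I would substitute $t = -8$ into $\Omega_2$, factor the numerator over $\Q$, and read off that exactly one factor appears with multiplicity $3$ contributing two points of weight $3$, with all remaining roots simple; the weight bookkeeping ($2\cdot 3$ from these two points, plus $48$ from branch points, plus the points at infinity, plus the simple roots) must sum to $108$ as a consistency check.

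The main obstacle is, as the authors indicate for the $V_4$ case, purely the size of the symbolic computation: the numerator of $\Omega_2$ and its discriminant in $t$ are large, and factoring $\Delta(h,x)$ cleanly into the displayed degree-$8$ polynomial times the spurious factors requires care that the $t=4$ and $t=0$ factors (and any automorphism-forced repeated factors of $h$ that are \emph{not} genuine) are correctly recognized and removed — otherwise one would over-count. Lemma~\ref{resultants} is exactly what makes this feasible, and the even (and quartic) symmetry of the defining equation is what must be exploited to keep the degrees manageable; verifying the final numerology against the total weight $108$ is the safeguard against an arithmetic slip.
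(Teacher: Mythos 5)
Your proposal follows the paper's proof in all essentials: normalize the curve as $y^2=(tx^4-1)(tx^4+tx^2+1)$, compute $\Omega_2=W(1,x,x^2,x^3,x^4,y)(dx)^{27}$, write its numerator as $x\,\phi(x)$ with $\phi$ even of degree $28$, and detect non-branch points of higher weight via the discriminant of $\phi$, which the paper finds to be $t^{275}(t-4)^{90}(t-8)^4\,p(t)^4$ with $p$ the displayed octavic; your added safeguards (Lemma~\ref{resultants} to pass to the polynomial in $x^2$, and the total-weight $108$ check) are sensible refinements the paper does not spell out here. Two points of divergence are worth flagging. First, your a priori guess that the two weight-$3$ points are the points at infinity is not what the paper finds: generically the numerator has degree $29$, so the infinite points have weight $1$, and there is no forced multiple factor of $x$ (the constant term of $\phi$ is $24-3t$, nonzero in general). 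The two weight-$3$ points appear only at the special parameter value, where $24-3t=0$ forces $x=0$ to become a triple root of $x\,\phi(x)$, yielding the points $(0,\pm i)$ over $x=0$; your fallback plan of substituting the special $t$ and factoring the numerator would recover exactly this, so the gap is in the heuristic, not the method. Second, the paper's own computation produces the factor $(t-8)^4$ and works with $t=8$, while the proposition statement (which you follow) says $t=-8$; one of the two carries a sign error, and your discriminant computation is precisely where that discrepancy must be resolved before you can assert the stated dichotomy.
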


\proof
The equation of this curve is given by
\[ y^2 = \left( t x^4-1 \right) \, \left( t x^4 + tx^2+1\right) \]
with discriminant $\D - 2^{12} \cdot t^{14} (t-4)^6$. 
The numerator of the Wronskian is a degree 29 polynomial in $x$, given by $x \phi(x)$, where   
\begin{small}
\[
\begin{split}
\phi(x) & =  \left( 24\,{t}^{7}-3\,{t}^{8} \right) {x}^{28}+ \left( -4\,{t}^{7}+4\,{t}^{8}+224\,{t}^{6} \right) {x}^{26}+ \left( 63\,{t}^{7}+504\,{t}^{6} \right) {x}^{24}+1368\,{t}^{6}{x}^{22} \\
& + \left( 4\,{t}^{7}+2888\,{t}^{5}+1045\,{t}^{6} \right) {x}^{20}+ \left( 3360\,{t}^{4}+588\,{t}^{6}+3780\,{t}^{5} \right) {x}^{18}+  ( 3375\,{t}^{5}+108\,{t}^{6}\\
& +5544\,{t}^{4} ) {x}^{16} + \left( 7632\,{t}^{4}+1056\,{t}^{5} \right) {x}^{14}+ \left( 5544\,{t}^{3}+3375\,{t}^{4}+108\,{t}^{5} \right) {x}^{12}+ ( 3780\,{t}^{3} \\
& +3360\,{t}^{2}+588\,{t}^{4} ) {x}^{10}+ \left( 1045\,{t}^{3}+4\,{t}^{4}+2888\,{t}^{2} \right) {x}^{8}+1368\,{t}^{2}{x}^{6}+ \left( 504\,t+63\,{t}^{2} \right) {x}^{4} \\
& + \left( -4\,t+4\,{t}^{2}+224 \right) {x}^{2}+24-3\,t
\end{split} 
\]
\end{small}
Its discriminant is 
\begin{small}
\[ 
\begin{split}
\Delta = & t^{275} \, (t-4)^{90} \, \left( t-8 \right)^4 \left( {t}^{8}+600822\,{t}^{7}+71378609\,{t}^{6}+4219381768\,{t}^{5}+85080645104\,{t}^{4}  \right.    \\
& \left.   -2272444082944\,{t}^{3}+16480136388352\,{t}^{2}-50330309965824\,t+56693912375296 \right)^4   \\
\end{split}
\] 
\end{small}
Hence, for $t=8$ or $t$ satisfying the octavic polynomial 
the corresponding curve has $2$-Weierstrass points.  
In the first case, $t=8$, the curve becomes
\[ y^2 = \left( 8\,{x}^{4}-1 \right)  \left( 8\,{x}^{4}+8\,{x}^{2}+1 \right)  \]
The Wronskian $\Omega_2$ has $x=0$ as a triple root. Hence, the points $(0, i)$ and $(0, -i)$, for $i^2=-1$ are $2$-Weierstrass points of weight 3.

If $t$ is a root of the second factor, then the Galois group of this octavic is $S_8$ and therefore not solvable by radicals. 

\qed

\noindent Summarizing we have the following theorem. 
\begin{thm}[Main Theorem]
Let $C$ be a genus 3 hyperelliptic curve such that $|\Aut (C) | > 4$ and $\H (\Aut (C) )$ is a locus of dimension $d> 0$ in $\H_3$ and $\pi : C \to \mathbb P^1$ the hyperelliptic projection. Then, 
each branch point of $\pi$ has $2$-weight 6 and  one of the following holds:

i) If $\Aut (C) \iso \Z_2^3$, then $C$ has non-branch $2$-Weierstrass points of weight greater than one if and only if its corresponding dihedral invariants $\s_2, \s_3, \s_4$ satisfy Eq.~\eqref{eq_Z2_3}

ii) If $\Aut (C) \iso \Z_2 \times D_8$ then $C$ has at least four non-branch $2$-Weierstrass points of weight 3.  Moreover, if   $C$ is isomorphic to the curve  $y^2= t x^8 + t x^4 +1$, for $t = 196 $ (resp. $t= - \frac {196} {15}$) then $C$ has in addition 8 other points of weight 3 (resp.  16 points of weight 2). 

iii) If $\Aut (C) \iso D_{12}$ then $C$ has non-branch $2$-Weierstrass points with weight greater than one if and only if $C$ is isomorphic to one of the   curves  $y^2= x (tx^6+tx^3+1)$, for $t = - \frac {49} 8$ or $t= \frac {1728} 8 \pm \frac {621} 4 \sqrt{2}$. 

In the first case, the curve has twelve  $2$-Weierstrass points of weight 3 and in the other two cases twelve $2$-Weierstrass points of weight 2.

iv) If $\Aut (C) \iso \Z_2 \times \Z_4$ then  $C$ has $2$-Weierstrass points if and only if $C$ is isomorphic to one of the   curves  $y^2=\left( t x^4-1 \right) \, \left( t x^4 + tx^2+1\right)  $, for $t = -  8$ or it is a root of  
\begin{Small}
\begin{equation}
\begin{split}
  \, \, {t}^{8}+600822\,{t}^{7}+71378609\,{t}^{6}+4219381768\,{t}^{5}+85080645104\,{t}^{4}   -2272444082944\,{t}^{3} \\
   +16480136388352\,{t}^{2}-50330309965824\,t+56693912375296  & = 0
\end{split}
\end{equation} 
\end{Small}
In the first case, the curve has two  $2$-Weierstrass points of weight 3. 
\end{thm}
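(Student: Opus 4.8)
The plan is to derive the Main Theorem purely as an assembly of the per-group results established in Sections~\ref{sec:classification-q-w-points} and~\ref{sec:classification-2}, organized around the classification of positive-dimensional $G$-loci in $\H_3$. First I would invoke the structure theorem of Section~3: if $|\Aut (C)| > 2$ and $\dim \H (\Aut (C)) \geq 1$, then $\Aut (C)$ is one of $V_4$ (locus of dimension $3$), $\Z_2^3$ (dimension $2$), $\Z_2 \times D_8$, $D_{12}$, or $\Z_2 \times \Z_4$ (each of dimension $1$). Since $|V_4| = 4$, the hypothesis $|\Aut (C)| > 4$ excludes $V_4$ and leaves exactly the four groups occurring in i)--iv); moreover Section~3 records a rational model of the generic curve in each of these loci, so from here on one argues family by family with an explicit equation $y^2 = f(x)$ in hand.

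Second, I would prove the uniform assertion that every branch point of $\pi$ has $2$-weight $6$. In cases i), ii), iv) the curve has a separable degree-$8$ model, so the Corollary following Theorem~\ref{thm:basis-for-q-differentials} applies with $q = 2$: taking $\beta = \pm\alpha_i$ in the basis $B_{2,\beta}$, each of the eight affine branch points $R_i^\pm$ has $2$-gap sequence $\{1,2,3,5,7,9\}$, whence $w^{(2)}(R_i^\pm) = 6$. Case iii), $\Aut (C) \iso D_{12}$, is the one situation needing a separate look, since the model $y^2 = x(tx^6 + tx^3 + 1)$ has odd degree and a single branch point $P^\infty$ over infinity; there I would invoke the divisor table and the basis of holomorphic $2$-differentials displayed just before the $D_{12}$ proposition, which yield the same $2$-gap sequence $\{1,2,3,5,7,9\}$, and hence $2$-weight $6$, at each of the eight branch points $R_1,\dots,R_7,P^\infty$.

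Third, I would dispatch i)--iv), each of which follows at once from the corresponding proposition of Section~\ref{sec:classification-2}: the proposition for $\Aut (C) \iso \Z_2^3$ gives i), including the discriminant condition Eq.~\eqref{eq_Z2_3}; the proposition for $\Aut (C) \iso \Z_2 \times D_8$ gives ii), including the additional weight-$3$ and weight-$2$ points occurring at the two exceptional values of the parameter identified there; the proposition for $\Aut (C) \iso D_{12}$ gives iii), including the values of $t$ at which twelve non-branch $2$-Weierstrass points of weight $3$ (resp.\ of weight $2$) appear; and the proposition for $\Aut (C) \iso \Z_2 \times \Z_4$ gives iv), including the exceptional $t$ and the displayed octavic. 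In every case the mechanism is the same: one forms $\Omega_2 = W(1,x,x^2,x^3,x^4,y)(dx)^{27}$, writes its numerator (up to explicit factors) as $x\,\phi(x^2)$, and reads off the non-branch $2$-Weierstrass points from the roots of $\phi$ together with the order-of-vanishing data, the points of weight greater than one being exactly those at which $\phi$ (equivalently $\Phi$) acquires a multiple root.

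The assembly itself is routine; the real labor lies upstream, in Section~\ref{sec:classification-2}: the explicit symbolic evaluation of the $6 \times 6$ Wronskian for each family, and the factorization of the discriminant of the reduced form of its numerator. This is precisely where Lemma~\ref{resultants} is indispensable --- once the evident factor $x$ is removed the numerator is a polynomial in $t = x^2$, and the lemma replaces $\D(g,x)$ for $g(x) = h(x^2)$ by a constant multiple of $\D(h,t)^2$, essentially halving the degree of the polynomial whose discriminant must be computed and so bringing the symbolic algebra within reach; for $\Aut (C) \iso \Z_2^3$, carrying everything in the dihedral invariants $\s_2, \s_3, \s_4$ keeps the intermediate expressions bounded as well. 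As a final consistency check in each family I would verify that the tabulated non-branch $2$-weights sum to $60$, which together with the branch contribution $8 \cdot 6 = 48$ recovers the total weight $108$ of the genus-$3$ example in Section~\ref{sec:prelims}.
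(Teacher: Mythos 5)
Your proposal is correct and matches the paper's approach: the paper offers no separate argument for the Main Theorem beyond the word ``Summarizing,'' since the result is exactly the assembly you describe --- the classification of positive-dimensional loci from Section 3 (with $V_4$ excluded by the hypothesis $|\Aut (C)| > 4$), the branch-point weight $6$ obtained from the corollary to Theorem~\ref{thm:basis-for-q-differentials} (with the odd-degree $D_{12}$ model handled via the separate divisor table), and the four propositions of Section~\ref{sec:classification-2}. Your closing consistency check, that the non-branch weights in each table sum to $60 = 108 - 8\cdot 6$, is a small addition the paper only carries out explicitly in the $D_{12}$ case.
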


\section{Further directions}
In this paper we explicitly determined the $2$-Weierstrass points of genus 3 hyperelliptic curves with extra automorphisms.  Similar methods can be used for $3$-Weierstrass points even though the computations are longer and more difficult. 

In each case, the curves which have $2$-Weierstrass points are determined uniquely. That follows from the fact that parameters $\s_2, \s_3, \s_4$ or the parameter $t$ are rational functions in terms of the absolute invariants $t_1, \dots , t_6$. The above results also provide a convienient way to check if a genus 3 hyperelliptic curve has $2$-Weierstrass points. This is done by simply computing the absolute invartiants $t_1, \dots , t_6$ and checking which locus they satisfy. It is our goal to incorporate such methods in a computational package for genus 3 curves. 

In \cite{SS2} we intend to study generalizations of such results to all hyperelliptic curves.  The dihedral invariants used here are generalized to all hyperelliptic curves in \cite{g-sh}.


\bibliographystyle{amsplain}

\begin{bibdiv}
\begin{biblist}

\bibselect{bibl}     

\end{biblist}
\end{bibdiv}


\end{document}